\newtheorem{assumption}{Assumption}
\newtheorem{definition}{Definition}
\newtheorem{lemma}{Lemma}
\newtheorem{theorem}{Theorem}
\newtheorem{corollary}{Corollary}
\theoremstyle{remark}\newtheorem*{remark}{Remark}
\newcommand{\ba}{\begin{array}}
\newcommand{\ea}{\end{array}}
\newcommand{\beq}{\begin{equation}}
\newcommand{\eeq}{\end{equation}}
\newcommand{\beqa}{\begin{eqnarray}}
\newcommand{\eeqa}{\end{eqnarray}}
\newcommand{\beqas}{\begin{eqnarray*}}
\newcommand{\eeqas}{\end{eqnarray*}}
\newcommand{\bi}{\begin{itemize}}
\newcommand{\ei}{\end{itemize}}
\newcommand{\gap}{\hspace*{2em}}
\newcommand{\reals}{\mathbb{R}}
\newcommand{\dom}{\mathrm{dom\,}}
\newcommand{\eqdef}{\stackrel{\mathrm{def}}{=}}
\newcommand{\bE}{\mathbf{E}}
\newcommand{\bP}{\mathbf{P}}
\newcommand{\diag}{\mathrm{diag}}
\newcommand{\eps}{\epsilon}
\title{On the Complexity Analysis of Randomized Block-Coordinate Descent Methods}
\author{
	Zhaosong Lu%
	\thanks{
	Department of Mathematics, Simon Fraser University, Burnaby, BC,
	V5A 1S6, Canada. (email: {\tt zhaosong@sfu.ca}). This author was supported
        in part by NSERC Discovery Grant.}
	\and
	Lin Xiao
	\thanks{Machine Learning Groups, Microsoft Research, One Microsoft Way, Redmond, WA 98052, USA.
        (email: {\tt lin.xiao@microsoft.com}).}
}
\date{May 20, 2013}
\begin{document}

\maketitle

\begin{abstract}

In this paper we analyze the randomized block-coordinate descent (RBCD) methods proposed
in~\cite{Nesterov12rcdm,RichtarikTakac12} for minimizing the sum of a smooth convex function and a
block-separable convex function. In particular, we extend Nesterov's technique developed in~\cite{Nesterov12rcdm} for analyzing the RBCD method for minimizing a smooth convex
function over a block-separable closed convex set to the aforementioned more general problem and obtain a
sharper expected-value type of convergence rate than
the one implied in~\cite{RichtarikTakac12}.
%which improves upon the one implied in \cite{RichtarikTakac12} 
%by at least (and possibly much more than) a factor of $4/3$.  
Also, we obtain a
better high-probability type of iteration complexity, which improves upon the one 
in~\cite{RichtarikTakac12}
by at least the amount $O(n/\epsilon)$, where $\eps$ is the target solution accuracy and~$n$ is
the number of problem blocks.  In addition, for unconstrained smooth convex minimization,  we
develop a new technique called {\it randomized estimate sequence} to analyze the accelerated RBCD
method proposed by Nesterov \cite{Nesterov12rcdm} and establish a sharper expected-value type of
convergence rate than the one given in \cite{Nesterov12rcdm}.

\vskip14pt

\paragraph{Key words:}
Randomized block-coordinate descent, accelerated coordinate descent,
iteration complexity, convergence rate, composite minimization.

\vskip14pt

%\noindent
%{\bf AMS 2000 subject classification:}

\end{abstract}

\section{Introduction} \label{introduction}

Block-coordinate descent (BCD) methods and their variants have been successfully applied
to solve various large-scale optimization problems (see, for example, \cite{WuLa08,LiOs09,TsYu09-1,TsYu09-2,WeGoSc,Wright10,QiScGo10,YuTo11}).
 At each iteration, these methods choose one block of coordinates to sufficiently reduce the objective value while keeping the
other blocks fixed. One common and simple approach for choosing such a block is by means of a {\it cyclic} strategy. The global and local convergence of the cyclic BCD method have been well studied in the
literature (see, for example, \cite{Tseng01,LuTs02}) though its global convergence rate still
remains unknown except for some special cases \cite{SahaTewari13}.

Instead of using a deterministic cyclic order, recently many researchers
proposed randomized strategies for choosing a block to update 
at each iteration of the BCD methods
\cite{ChHsLi08,HsChLiKeSu08,ShTe09,LeLe10,Nesterov12rcdm,RiTa11,RichtarikTakac12,ShZh12,
RichtarikTakac12-1,TaRiGo13}. 
The resulting methods are called randomized BCD (RBCD) methods. 
Numerous experiments have demonstrated that the
RBCD methods are very powerful for solving large- and even huge-scale optimization problems
arising in machine learning \cite{ChHsLi08,HsChLiKeSu08,ShTe09,ShZh12}. In particular, Chang et al.\
\cite{ChHsLi08} proposed a RBCD method for minimizing several smooth functions appearing in
machine learning and derived its iteration complexity. Shalev-Shwartz and Tewari \cite{ShTe09} studied
a RBCD method for minimizing $l_1$-regularized smooth convex problems. They first transformed the problem
into a box-constrained smooth problem by doubling the dimension and then applied a block-coordinate
gradient descent method in which each block was chosen with equal probability. Leventhal and Lewis
\cite{LeLe10} proposed a RBCD method for minimizing a convex quadratic function and established its
iteration complexity.  Nesterov \cite{Nesterov12rcdm} analyzed some RBCD methods for minimizing
a smooth convex function over a closed block-separable convex set and established its iteration complexity,
which in effect extends and improves upon some of the results in \cite{ChHsLi08,LeLe10,ShTe09} in
several aspects. 
Richt\'{a}rik and Tak\'{a}\v{c} \cite{RichtarikTakac12} generalized
the RBCD methods proposed in \cite{Nesterov12rcdm} to the problem of minimizing a composite objective 
(i.e., the sum of a smooth convex function and a block-separable convex function) and 
derived some improved complexity results than those given in \cite{Nesterov12rcdm}. 
More recently, Shalev-Shwartz and Zhang \cite{ShZh12} studied a randomized
proximal coordinate ascent method for solving the dual of a class of large-scale convex
minimization problems arising in machine learning and established iteration complexity for obtaining a
pair of approximate primal-dual solutions.

Inspired by the recent work \cite{Nesterov12rcdm,RichtarikTakac12}, we consider the problem of
minimizing the sum of two convex functions:
\begin{equation}\label{eqn:composite-min}
\min_{x\in\Re^N} \ \left\{ F(x) \eqdef f(x) + \Psi(x) \right\},
\end{equation}
where $f$ is differentiable on $\Re^N$, and $\Psi$ has a block separable structure.
More specifically,
\[
\Psi(x) = \sum_{i=1}^n \Psi_i(x_i) ,
\]
where each $x_i$ denotes a subvector of~$x$ with cardinality~$N_i$, 
the collection $\{x_i :i=1,\ldots,n\}$ form a
partition of the components of~$x$, and each~$\Psi_i: \Re^{N_i} \to \Re\cup\{+\infty\}$ is a closed
convex function.
Given the current iterate $x^k$, the RBCD method \cite{RichtarikTakac12}
picks a block~$i\in\{1,\ldots,n\}$ uniformly 
at random and solves a block-wise proximal subproblem in the form of
\[
d_i(x^k) := \arg\min_{d_i\in\Re^{N_i}} \left\{ \langle \nabla_i f(x^k), 
d_i\rangle
+ \frac{L_i}{2}\|d_i\|^2 + \Psi_i(x^k_i + d_i) \right\},
\]
and then it sets the next iterate as
$x^{k+1}_i = x^k_i + d_i(x)$ and $x^{k+1}_j=x^k_j$ for all $j\neq i$.
Here $\nabla_i f(x)$ denotes the \emph{partial gradient} of~$f$ with respect
to~$x_i$, and~$L_i$ is the Lipschitz constant of the partial gradient
(which will be defined precisely later).

Under the assumption that the partial gradients of~$f$ with respect to each
block coordinate are Lipschitz continuous, Nesterov \cite{Nesterov12rcdm}
studied RBCD methods for solving some {\it special}  cases of problem \eqref{eqn:composite-min}.
In particular, for $\Psi \equiv 0$, he proposed a RBCD method in which a random block is chosen per 
iteration according to a uniform or certain non-uniform probability distributions and 
established an expected-value type of convergence rate. In addition, he proposed a RBCD method for
solving \eqref{eqn:composite-min} with each $\Psi_i$ being the indicator function of a closed convex set, 
in which a random block is chosen uniformly at each iteration. He also derived an expected-value type of
convergence rate for this method. It can be observed that the techniques used by Nesterov
to derive these two convergence rates substantially differ from each other, and moreover, for $\Psi \equiv 0$ 
the second rate is much better than the first one. 
(However, the second technique can only work with uniform distribution.)
Recently, Richt\'{a}rik and Tak\'{a}\v{c} \cite{RichtarikTakac12} extended Nesterov's RBCD methods to  
the {\it general} form of problem~\eqref{eqn:composite-min} and established a high-probability type of iteration
complexity. 
Although the expected-value type of convergence rate is not presented explicitly in \cite{RichtarikTakac12}, 
it can be readily obtained from some intermediate result
developed in \cite{RichtarikTakac12} (see Section~\ref{sec:RBCD}  for a detailed discussion). 
Their results can be considered as a generalization of Nesterov's first 
technique mentioned above. 
Given that for $\Psi \equiv 0$ 
Nesterov's second technique can produce a better convergence rate than his first one, 
a natural question is whether his second technique can be extended to 
work with the general setting of problem \eqref{eqn:composite-min} 
and obtain a sharper 
convergence rate than the one implied in \cite{RichtarikTakac12} .

In addition, Nesterov \cite{Nesterov12rcdm}  proposed an accelerated RBCD (ARCD) method for
solving problem \eqref{eqn:composite-min} with $\Psi \equiv 0$ and established an expected-value
type of convergence rate for his method. When $n=1$, this method becomes a deterministic accelerated full gradient method for minimizing smooth convex functions.
When $f$ is a strongly convex function, the convergence rate given in
\cite{Nesterov12rcdm} for $n=1$ is, however, worse than 
the well-known optimal rate shown in \cite[Theorem~2.2.2]{Nesterov04book}.
Then the question is whether a sharper convergence rate for the ARCD method
than the one given in \cite{Nesterov12rcdm} can be established
(which would match the optimal rate for $n=1$).

In this paper, we successfully address the above two questions by
obtaining some sharper convergence rates for the RBCD method for solving problem
\eqref{eqn:composite-min} and for the ARCD method in the case $\Psi\equiv 0$.
First, we extend Nesterov's second technique \cite{Nesterov12rcdm} 
developed for a special case of~\eqref{eqn:composite-min}
to analyze the RBCD method in the general setting,
and obtain a sharper expected-value type of convergence rate 
than the one implied in \cite{RichtarikTakac12}.
%which improves upon the one implied in \cite{RichtarikTakac12}
%by at least (and can be much more than) a factor of $4/3$. 
We also obtain a better high-probability type of iteration complexity, which
improves upon the one in \cite{RichtarikTakac12} 
at least by the amount $O(n/\epsilon)$, where $\eps$ is the target solution accuracy. 

For unconstrained smooth convex minimization (i.e., $\Psi \equiv 0$),  
we develop a new technique called {\it randomized estimate sequence} to
analyze Nesterov's ARCD method and establish a sharper expected-value type of convergence rate
than the one given in \cite{Nesterov12rcdm}. Especially, for $n=1$, our rate becomes the same 
as the well-known optimal rate achieved by accelerated full gradient method
\cite[Section~2.2]{Nesterov04book}.

This paper is organized as follows. 
In Section~\ref{sec:tech}, we develop some technical results that are
used to analyze the RBCD methods. 
In Section~\ref{sec:RBCD}, we analyze the RBCD method for problem~\eqref{eqn:composite-min} by extending Nesterov's second technique 
\cite{Nesterov12rcdm},
%developed for a special case of \eqref{eqn:composite-min} 
and establish a sharper expected-value type of converge rate as well as improved
high-probability iteration complexity.
In Section~\ref{sec:accl-rcd}, we develop the
randomized estimate sequence technique and use it to derive a sharper expected-value type of converge 
rate for the ARCD method for solving unconstrained smooth convex minimization.

\section{Technical preliminaries}
\label{sec:tech}

In this section we develop some technical results that will be used to analyze the RBCD and ARCD methods subsequently.
Throughout this paper we assume that problem~\eqref{eqn:composite-min} has
a minimum ($F^\star>-\infty$) and
its set of optimal solutions, denoted by $X^*$, is nonempty.

For any partition of $x\in\Re^N$ into $\{x_i\in\Re^{N_i}:i=1,\ldots,n\}$,
there is an $N\times N$ permutation matrix $U$
partitioned as $U=[U_1 \cdots U_n]$, where $U_i\in\Re^{N\times N_i}$, such that
\[
x = \sum_{i=1}^n U_i x_i, \qquad
\mbox{and}\quad x_i = U_i^T x, \quad i=1,\ldots,n.
\]
For any $x\in\Re^N$, the \emph{partial gradient} of~$f$ with respect to~$x_i$
is defined as
\[
\nabla_i f(x) = U_i^T \nabla f(x), \quad i=1,\ldots,n.
\]
For simplicity of presentation, we associate each subspace $\Re^{N_i}$,
for $i=1,\ldots,n$, with the standard Euclidean norm, denoted by $\|\cdot\|$.
We make the following assumption which is used 
in \cite{Nesterov12rcdm,RichtarikTakac12} as well.

\begin{assumption}\label{asmp:coord-smooth}
The gradient of function~$f$ is block-wise Lipschitz continuous
with constants $L_i$, i.e.,
\[
\|\nabla_i f(x+U_i h_i) - \nabla_i f(x) \| \leq L_i\|h_i\|,
\quad \forall\, h_i\in\reals^{N_i}, \quad i=1,\ldots,n, \quad x\in\reals^N.
\]
\end{assumption}

\gap 

Following \cite{Nesterov12rcdm}, we define the following pair of norms in 
the whole space $\Re^N$:
\begin{eqnarray*}
\|x\|_L &=& \biggl(\sum_{i=1}^n L_i \|x_i\|^2\biggr)^{1/2},
\quad\forall\,x\in\Re^N, \\
\|g\|_L^* &=& \biggl(\sum_{i=1}^n \frac{1}{L_i} \|g_i\|^2\biggr)^{1/2},
\quad\forall\,g\in\Re^N.
\end{eqnarray*}
Clearly, they satisfy the Cauchy-Schwartz inequality:
\[
\langle g, x \rangle \leq \|x\|_L \cdot \|g\|_L^*, \quad \forall\,x,g\in\Re^N.
\]

The convexity parameter of a convex function $\phi:\Re^N\to\Re\cup\{+\infty\}$ with
respect to the norm $\|\cdot\|_L$, denoted by $\mu_\phi$, is the largest $\mu \ge 0$
such that for all $x, y\in\dom\phi$,
\[
\phi(y) \geq \phi(x) + \langle s, y-x\rangle + \frac{\mu}{2}
\|y-x\|_L^2, \quad \forall\, s\in\partial\phi(x).
\]
Clearly, $\phi$ is strongly convex if and only if $\mu_\phi>0$.

Assume that  $f$ and~$\Psi$ have convexity parameters $\mu_f \ge 0$ and $\mu_\Psi \ge 0$
with respect to the norm $\|\cdot\|_L$, respectively. Then the convexity parameter of $F=f+\Psi$
is  at least $\mu_f + \mu_\Psi$. Moreover, by Assumption~\ref{asmp:coord-smooth}, we have
\begin{equation} \label{eqn:lip-ineq}
f(x+U_i h_i) \leq f(x) + \langle \nabla_i f(x), h_i\rangle
+ \frac{L_i}{2}\|h_i\|^2,
\quad \forall\, h_i\in\reals^{N_i}, \quad i=1,\ldots,n, \quad x\in\reals^N,
\end{equation}
which immediately implies that $\mu_f\leq 1$.

\gap 

The following lemma concerns the expected value of a block-separable
function when a random block of coordinate is updated.
\begin{lemma}\label{lem:norm-inc}
Suppose that $\Phi(x) = \sum^n_{i=1} \Phi_i(x_i)$. For any $x,d\in\Re^N$, if we pick $i\in\{1,\ldots,n\}$ uniformly
at random, then
\[
\bE_i\bigl[\Phi(x+U_i d_i)\bigr]
=\frac{1}{n}\Phi(x+d) + \frac{n-1}{n} \Phi(x) .
\]
\end{lemma}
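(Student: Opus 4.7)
The plan is to exploit the block-separable structure of $\Phi$ directly: updating only the $i$-th block changes exactly one summand in the decomposition $\Phi(x)=\sum_j \Phi_j(x_j)$, so the expectation over a uniform choice of $i$ averages a single ``swapped'' summand against $n-1$ unchanged ones.

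First I would observe that, by the definition of $U_i$, the vector $x+U_i d_i$ has its $i$-th block equal to $x_i+d_i$ and its $j$-th block equal to $x_j$ for every $j\neq i$. Using the block separability of $\Phi$, this gives the identity
\[
\Phi(x+U_i d_i) \;=\; \Phi_i(x_i+d_i) + \sum_{j\neq i}\Phi_j(x_j).
\]
Next I would take the expectation with respect to $i$ drawn uniformly from $\{1,\ldots,n\}$ and split the result into two sums:
\[
\bE_i\bigl[\Phi(x+U_i d_i)\bigr] \;=\; \frac{1}{n}\sum_{i=1}^n \Phi_i(x_i+d_i) \;+\; \frac{1}{n}\sum_{i=1}^n\sum_{j\neq i}\Phi_j(x_j).
\]

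The first sum is exactly $\Phi(x+d)$ since $(x+d)_i=x_i+d_i$ and $\Phi$ is block separable. For the second sum I would switch the order of summation and note that each index $j$ appears in the inner sum for every $i\neq j$, i.e.\ exactly $n-1$ times, giving $(n-1)\sum_{j=1}^n\Phi_j(x_j)=(n-1)\Phi(x)$. Dividing by $n$ and combining the two pieces yields the claimed formula.

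There is no real obstacle here: the statement is essentially a bookkeeping identity, and the only care needed is to correctly track which blocks are changed versus untouched when $U_i d_i$ is added to $x$. No smoothness or convexity hypothesis on $\Phi_i$ is used, so the lemma holds for arbitrary block-separable functions.
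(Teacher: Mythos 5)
Your proposal is correct and follows essentially the same route as the paper: expand $\Phi(x+U_id_i)$ into the single updated summand plus the $n-1$ unchanged ones, average over $i$, and recombine the two resulting sums into $\frac{1}{n}\Phi(x+d)+\frac{n-1}{n}\Phi(x)$. No gaps.
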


\begin{proof}
Since each~$i$ is picked randomly with probability $1/n$, we have
\begin{eqnarray*}
\bE_i\bigl[\Phi(x+U_i d_i)\bigr]
&=& \frac{1}{n} \sum_{i=1}^n \biggl( \Phi_i(x_i+d_i)
    +\sum_{j\neq i} \Phi_j(x_j) \biggr)\\
&=& \frac{1}{n} \sum_{i=1}^n \Phi_i(x_i+d_i)
    +\frac{1}{n}\sum_{i=1}^n \sum_{j\neq i} \Phi_j(x_j) \\
&=& \frac{1}{n}\Phi(x+d) + \frac{n-1}{n} \Phi(x) .
\end{eqnarray*}
\end{proof}

\gap

For notational convenience, we define
\begin{equation}\label{eqn:H-def}
H(x,d) \ := \  f(x) + \langle \nabla f(x), d \rangle + \frac{1}{2}\|d\|_L^2
+ \Psi(x + d).
\end{equation}
The following result is equivalent to \cite[Lemma~2]{RichtarikTakac12}.

\begin{lemma} \label{lem:expected-descent}
Suppose $x,d\in\Re^N$. If we pick $i\in\{1,\ldots,n\}$ uniformly
at random, then
\[
\bE_i \bigl[ F(x+U_id_i) \bigr] - F(x)
~\leq~ \frac{1}{n} \bigl( H(x,d) - F(x) \bigr).
\]
\end{lemma}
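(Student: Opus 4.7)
The plan is to decompose $F = f + \Psi$, bound the expected change in $f$ using the per-block Lipschitz inequality \eqref{eqn:lip-ineq}, handle the expected change in $\Psi$ exactly via Lemma~\ref{lem:norm-inc}, and then combine and simplify to recognize the quantity $H(x,d) - F(x)$ on the right-hand side.

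First I would fix $x,d \in \reals^N$ and apply \eqref{eqn:lip-ineq} with $h_i = d_i$ for each $i$, giving the deterministic bound
\[
f(x + U_i d_i) \leq f(x) + \langle \nabla_i f(x), d_i \rangle + \frac{L_i}{2}\|d_i\|^2.
\]
Taking the expectation over a uniformly random $i \in \{1,\ldots,n\}$ and using $\nabla_i f(x) = U_i^T \nabla f(x)$ together with $d = \sum_i U_i d_i$, the cross term averages to $\frac{1}{n}\sum_{i=1}^n \langle \nabla_i f(x), d_i\rangle = \frac{1}{n}\langle \nabla f(x), d \rangle$, while the quadratic term gives $\frac{1}{2n}\sum_{i=1}^n L_i \|d_i\|^2 = \frac{1}{2n}\|d\|_L^2$. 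Hence
\[
\bE_i[f(x+U_i d_i)] \leq f(x) + \frac{1}{n}\langle \nabla f(x), d\rangle + \frac{1}{2n}\|d\|_L^2.
\]

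Next, since $\Psi$ is block-separable, I would invoke Lemma~\ref{lem:norm-inc} with $\Phi = \Psi$ to obtain
\[
\bE_i[\Psi(x+U_i d_i)] = \frac{1}{n}\Psi(x+d) + \frac{n-1}{n}\Psi(x).
\]
Summing this equality with the previous bound and subtracting $F(x) = f(x) + \Psi(x)$ from both sides yields
\[
\bE_i[F(x+U_id_i)] - F(x) \leq \frac{1}{n}\Bigl(\langle \nabla f(x), d\rangle + \frac{1}{2}\|d\|_L^2 + \Psi(x+d) - \Psi(x)\Bigr),
\]
and the expression in parentheses is exactly $H(x,d) - F(x)$ by the definition \eqref{eqn:H-def}, which gives the claim.

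There is no real obstacle here: the argument is a direct combination of the block-wise descent inequality for the smooth part and the exact separable identity of Lemma~\ref{lem:norm-inc} for the nonsmooth part. The only point requiring a moment's care is keeping track of the factors of $1/n$ arising from the uniform distribution and noticing that the weighted sum $\sum_i L_i\|d_i\|^2$ assembles cleanly into $\|d\|_L^2$; otherwise the proof is essentially a bookkeeping step.
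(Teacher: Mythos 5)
Your proof is correct. The paper itself gives no proof of this lemma, simply noting it is equivalent to Lemma~2 of Richt\'{a}rik and Tak\'{a}\v{c}; your argument --- the block-wise Lipschitz bound \eqref{eqn:lip-ineq} for the smooth part combined with the exact separable identity of Lemma~\ref{lem:norm-inc} for $\Psi$, then recognizing $H(x,d)-F(x)$ --- is exactly the standard derivation underlying that cited result, and all the $1/n$ bookkeeping checks out.
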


\gap

We next develop some results regarding the
\emph{block-wise composite gradient mapping}. 
Composite gradient mapping was introduced by Nesterov \cite{Nesterov07composite}
for the analysis of full gradient methods for solving 
problem~\eqref{eqn:composite-min}.
Here we extend the concept and several associated properties 
to the block-coordinate case.

As mentioned in the introduction, the RBCD methods studied 
in~\cite{RichtarikTakac12} solves in each iteration a block-wise 
proximal subproblem in the form of: 
\[
d_i(x) := \arg\min_{d_i\in\Re^{N_i}} \left\{ \langle \nabla_i f(x), d_i\rangle
+ \frac{L_i}{2}\|d_i\|^2 + \Psi_i(x_i + d_i) \right\}, 
\]
for some $i\in\{1,\ldots,n\}$.
By the first-order optimality condition, there exists a subgradient $s_i\in\partial\Psi_i(x_i+d_i(x))$ such that
\begin{equation}\label{eqn:coord-opt}
\nabla_i f(x) + L_i d_i(x) + s_i = 0.
\end{equation}
Let $d(x) = \sum_{i=1}^n U_i d_i(x)$.
By \eqref{eqn:H-def}, the definition of $\|\cdot\|_L$ and separability of~$\Psi$, we then have
\[
d(x) = \arg\min_{d\in\Re^N} H(x,d).
\]

We define the block-wise composite gradient mappings as
\[
g_i(x) \eqdef - L_i d_i(x), \quad i=1,\ldots,n.
\]
From the optimality conditions~(\ref{eqn:coord-opt}), we conclude
\[
-\nabla_i f(x) + g_i(x) \in \partial \Psi_i(x_i+d_i(x)), \quad i=1,\ldots, n.
\]
Let
\[
g(x) = \sum_{i=1}^n U_i g_i(x).
\]
Then we have
\begin{equation}\label{eqn:coord-subg}
-\nabla f(x) + g(x) \in \partial \Psi(x+d(x)).
\end{equation}
Moreover,
\[
\|d(x)\|_L^2 = \sum_{i=1}^n L_i \|d_i(x)\|^2
=\sum_{i=1}^n \frac{1}{L_i} \|g_i(x)\|^2 = \bigl( \|g(x)\|_L^* \bigr)^2,
\]
and
\begin{equation}\label{eqn:g-d-inner}
\langle g(x), d(x) \rangle = - \|d(x)\|_L^2 = -\bigl( \|g(x)\|_L^* \bigr)^2 .
\end{equation}

The following result establishes a lower bound of the function value $F(y)$,
where $y$ is arbitrary in $\Re^N$, based on the composite gradient mapping
at another point~$x$.

\begin{lemma}\label{lem:forward-looking}
For any fixed $x,y\in\Re^N$, if we pick $i\in\{1,\ldots,n\}$ uniformly
at random, then
\begin{eqnarray*}
\frac{1}{n} F(y) + \frac{n-1}{n} F(x)
&\geq& \bE_i \bigl[F(x+U_i d_i(x))\bigr]
  +\frac{1}{n}\left( \langle g(x), y-x \rangle
  + \frac{1}{2}\bigl(\|g(x)\|_L^* \bigr)^2 \right) \\
& & + \frac{1}{n}\left( \frac{\mu_f}{2}\|x-y\|_L^2
  + \frac{\mu_\Psi}{2}\|x+d(x)-y\|_L^2 \right).
\end{eqnarray*}
\end{lemma}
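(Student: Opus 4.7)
The plan is to build a lower bound on $F(y)$ by combining the (strong) convexity of $f$ and of $\Psi$, then repackage the result in terms of $H(x,d(x))$, and finally invoke Lemma~\ref{lem:expected-descent} to replace $H(x,d(x))$ with $n\bE_i[F(x+U_id_i(x))]$.

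First, I would apply convexity of $f$ at the base point $x$ to get
$f(y)\ge f(x)+\langle\nabla f(x),y-x\rangle+\tfrac{\mu_f}{2}\|y-x\|_L^2$, and convexity of $\Psi$ at the point $x+d(x)$, using the subgradient $-\nabla f(x)+g(x)\in\partial\Psi(x+d(x))$ provided by~\eqref{eqn:coord-subg}, to get $\Psi(y)\ge\Psi(x+d(x))+\langle-\nabla f(x)+g(x),\,y-x-d(x)\rangle+\tfrac{\mu_\Psi}{2}\|y-x-d(x)\|_L^2$. Adding these two inequalities yields a lower bound on $F(y)=f(y)+\Psi(y)$ in which the inner-product terms involving $\nabla f(x)$ can be regrouped: the cross terms combine to $\langle\nabla f(x),d(x)\rangle+\langle g(x),y-x\rangle-\langle g(x),d(x)\rangle$, and the last of these is exactly $+(\|g(x)\|_L^*)^2$ by identity~\eqref{eqn:g-d-inner}.

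Next, I would recognize the expression $f(x)+\langle\nabla f(x),d(x)\rangle+\Psi(x+d(x))$ as $H(x,d(x))-\tfrac12(\|g(x)\|_L^*)^2$, using the definition~\eqref{eqn:H-def} together with $\|d(x)\|_L^2=(\|g(x)\|_L^*)^2$. Substituting leaves a clean inequality
$$
F(y)\ge H(x,d(x))+\langle g(x),y-x\rangle+\tfrac12(\|g(x)\|_L^*)^2+\tfrac{\mu_f}{2}\|y-x\|_L^2+\tfrac{\mu_\Psi}{2}\|y-x-d(x)\|_L^2,
$$
where the coefficient $1/2$ on $(\|g(x)\|_L^*)^2$ is obtained as the algebraic sum $-\tfrac12+1$ of the contribution from the definition of $H$ and the contribution extracted from \eqref{eqn:g-d-inner}.

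Finally, I would apply Lemma~\ref{lem:expected-descent}, which in rearranged form says $H(x,d(x))\ge n\,\bE_i[F(x+U_id_i(x))]-(n-1)F(x)$, plug this into the previous display, move $(n-1)F(x)$ to the left-hand side, and divide through by $n$; the result is exactly the claimed inequality. The only nontrivial step is the inner-product bookkeeping in the second paragraph — making sure the $\nabla f(x)$ terms cancel cleanly against the $d(x)$ component while \eqref{eqn:g-d-inner} produces $(\|g(x)\|_L^*)^2$ with precisely the right sign to combine with the $-\tfrac12\|d(x)\|_L^2$ hidden inside $H(x,d(x))$; everything else is a direct rewrite.
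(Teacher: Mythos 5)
Your proposal is correct and is essentially the paper's own argument: it uses the same strong-convexity inequalities for $f$ at $x$ and for $\Psi$ at $x+d(x)$ (via the subgradient from \eqref{eqn:coord-subg}), the same identity \eqref{eqn:g-d-inner}, and the same final appeal to Lemma~\ref{lem:expected-descent}. The only difference is cosmetic — you lower-bound $F(y)$ directly, whereas the paper upper-bounds $H(x,d(x))$ by $F(y)+\langle g(x),x-y\rangle-\tfrac12\bigl(\|g(x)\|_L^*\bigr)^2-\cdots$ and then rearranges — so the two derivations are the same chain of inequalities read in opposite directions.
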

\begin{proof}
By (\ref{eqn:coord-subg}) and convexity of $f$ and $\Psi$, we have
\begin{eqnarray*}
H(x, d(x))
&=& f(x) + \langle \nabla f(x), d(x) \rangle + \frac{1}{2}\|d(x)\|_L^2
    + \Psi(x + d(x)) \\
&\leq& f(y) + \langle \nabla f(x), x-y\rangle - \frac{\mu_f}{2}\|x-y\|_L^2
    +\langle \nabla f(x), d(x)\rangle + \frac{1}{2}\|d(x)\|_L^2 \\
&&  + \Psi(y) + \langle -\nabla f(x) + g(x), x+d(x)-y\rangle
    - \frac{\mu_\Psi}{2}\|x+d(x)-y\|_L^2 \\
&=& F(y) + \langle g(x), x-y\rangle + \langle g(x), d(x) \rangle
    + \frac{1}{2}\|d(x)\|_L^2  - \frac{\mu_f}{2}\|x-y\|_L^2 \\
& & - \frac{\mu_\Psi}{2}\|x+d(x)-y\|_L^2 \\
&=& F(y) + \langle g(x), x-y\rangle - \frac{1}{2}\bigl(\|g(x)\|_L^*\bigr)^2
    - \frac{\mu_f}{2}\|x-y\|_L^2
    - \frac{\mu_\Psi}{2}\|x+d(x)-y\|_L^2 ,
\end{eqnarray*}
where the last inequality holds due to~(\ref{eqn:g-d-inner}).
This together with Lemma~\ref{lem:expected-descent} yields the desired result.
\end{proof}

\gap

Using Lemma~\ref{lem:norm-inc} with $\Phi(\cdot)=\|\cdot\|_L^2$,
we can rewrite the conclusion of Lemma~\ref{lem:forward-looking} in an 
equivalent form:
\begin{eqnarray}
\frac{1}{n} F(y) + \frac{n-1}{n} F(x) \!\!&+&\!\! \frac{\mu_\Psi}{2}\|x-y\|_L^2
~\geq~ \bE_i \left[F(x+U_i d_i(x))
  + \frac{\mu_\Psi}{2}\|x+U_id_i-y\|_L^2\right] \nonumber \\
&+&\frac{1}{n}\left( \langle g(x), y-x \rangle
  + \frac{1}{2}\bigl(\|g(x)\|_L^* \bigr)^2
  + \frac{\mu_f+\mu_\Psi}{2}\|x-y\|_L^2 \right).
\label{eqn:forward-messy}
\end{eqnarray}
This is the form we will actually use in our subsequent convergence analysis.

\gap

Letting $y=x$ in Lemma~\ref{lem:forward-looking}, we obtain the following
corollary.

\begin{corollary} \label{cor:monotone}
Given $x\in\Re^N$. If we pick $i\in\{1,\ldots,n\}$ uniformly
at random, then
\[
F(x) - \bE_i\bigl[ F(x+U_i d_i(x))\bigr]
~\geq~  \frac{1+\mu_\Psi}{2n} \bigl(\|g(x)\|_L^*)^2
~=~  \frac{1+\mu_\Psi}{2n} \bigl(\|d(x)\|_L)^2.
\]
\end{corollary}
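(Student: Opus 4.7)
The plan is to derive the corollary as a direct specialization of Lemma~\ref{lem:forward-looking} at $y=x$. This is the most natural choice because it forces the unwanted cross-terms (the inner product $\langle g(x), y-x\rangle$ and the $\mu_f$-weighted squared norm $\|x-y\|_L^2$) to vanish, leaving behind exactly the quantities that appear on the right-hand side of the corollary.

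Concretely, I would first substitute $y=x$ into the inequality of Lemma~\ref{lem:forward-looking}. On the left, the convex combination $\tfrac{1}{n}F(y)+\tfrac{n-1}{n}F(x)$ collapses to $F(x)$. On the right, $\langle g(x), y-x\rangle = 0$ and $\tfrac{\mu_f}{2}\|x-y\|_L^2 = 0$. The remaining contribution from the strong convexity of $\Psi$ is $\tfrac{1}{n}\cdot\tfrac{\mu_\Psi}{2}\|x+d(x)-y\|_L^2 = \tfrac{\mu_\Psi}{2n}\|d(x)\|_L^2$. Together with the $\tfrac{1}{2n}(\|g(x)\|_L^*)^2$ term that survives, this gives
\[
F(x) \;\geq\; \bE_i\bigl[F(x+U_id_i(x))\bigr] + \frac{1}{2n}\bigl(\|g(x)\|_L^*\bigr)^2 + \frac{\mu_\Psi}{2n}\|d(x)\|_L^2.
\]

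Next I would invoke the identity $\|d(x)\|_L^2 = (\|g(x)\|_L^*)^2$ derived just above Lemma~\ref{lem:forward-looking} (from $g_i(x) = -L_i d_i(x)$ and the definitions of $\|\cdot\|_L$ and $\|\cdot\|_L^*$). This lets me pool the two quadratic terms into the single expression $\tfrac{1+\mu_\Psi}{2n}(\|g(x)\|_L^*)^2$, and the same identity gives the alternative form $\tfrac{1+\mu_\Psi}{2n}\|d(x)\|_L^2$ written in the conclusion.

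Rearranging yields the claimed inequality. There is no substantial obstacle here: everything reduces to careful bookkeeping of which terms survive the substitution $y=x$ and a single use of the norm-duality identity linking $d(x)$ and $g(x)$. The only point worth double-checking is that the $\mu_\Psi$ contribution in Lemma~\ref{lem:forward-looking} is evaluated at $x+d(x)-y$ rather than $x-y$, which is precisely what produces the $\mu_\Psi$-weighted $\|d(x)\|_L^2$ term needed to match the $1+\mu_\Psi$ coefficient in the corollary.
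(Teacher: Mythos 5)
Your proposal is correct and is exactly the paper's argument: the paper obtains this corollary by "letting $y=x$ in Lemma~\ref{lem:forward-looking}," and your bookkeeping of the vanishing terms, the surviving $\frac{\mu_\Psi}{2n}\|d(x)\|_L^2$ contribution, and the identity $\|d(x)\|_L^2=(\|g(x)\|_L^*)^2$ fills in the (omitted) details correctly.
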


\gap

By similar arguments as in the proof of Lemma~\ref{lem:forward-looking}, it can be shown that a 
similar result as Lemma~\ref{lem:forward-looking} also holds block-wise without taking expectation:
\[
F(x) - F(x+U_i d_i(x)) \geq \frac{1+\mu_\Psi}{2} L_i \|d_i(x)\|^2 .
\]

The following (trivial) corollary is useful when we do not have knowledge
on~$\mu_f$ or~$\mu_\Psi$.
\begin{corollary}\label{lem:forward-looking-nomu}
For any fixed $x,y\in\Re^N$, if we pick $i\in\{1,\ldots,n\}$ uniformly
at random, then
\[
\frac{1}{n} F(y) + \frac{n-1}{n} F(x)
~\geq~ \bE_i \bigl[F(x+U_i d_i(x))\bigr]
  +\frac{1}{n}\left( \langle g(x), y-x \rangle
  + \frac{1}{2}\bigl(\|g(x)\|_L^* \bigr)^2 \right).
\]
\end{corollary}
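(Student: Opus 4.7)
The plan is extremely short because this corollary is labeled ``trivial'' for a reason: it is simply Lemma~\ref{lem:forward-looking} with two nonnegative terms deleted from the right-hand side. So I would not do any calculation; I would only justify why those terms are nonnegative.

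First I would recall that by the assumption stated right before Lemma~\ref{lem:norm-inc}, the convexity parameters satisfy $\mu_f \ge 0$ and $\mu_\Psi \ge 0$. Second, I would note that $\|x-y\|_L^2 \ge 0$ and $\|x+d(x)-y\|_L^2 \ge 0$ trivially from the definition of the norm $\|\cdot\|_L$. Consequently, the term
\[
\frac{1}{n}\left(\frac{\mu_f}{2}\|x-y\|_L^2 + \frac{\mu_\Psi}{2}\|x+d(x)-y\|_L^2\right)
\]
appearing on the right-hand side of Lemma~\ref{lem:forward-looking} is nonnegative. Dropping it weakens the right-hand side and therefore preserves the inequality, yielding exactly the claimed bound.

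There is no real obstacle here; the corollary exists purely to provide a version of Lemma~\ref{lem:forward-looking} that does not mention $\mu_f$ or $\mu_\Psi$, which will be convenient later when analyzing the RBCD method without assuming strong convexity. One minor stylistic point is whether to invoke Lemma~\ref{lem:forward-looking} directly or to rederive it; I would just cite the lemma and argue the one-line dropping of nonnegative terms, keeping the proof to a single sentence.
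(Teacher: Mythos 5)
Your proposal is correct and matches the paper's (implicit) argument exactly: the paper presents this as a trivial corollary of Lemma~\ref{lem:forward-looking}, obtained by dropping the nonnegative term $\frac{1}{n}\bigl(\frac{\mu_f}{2}\|x-y\|_L^2 + \frac{\mu_\Psi}{2}\|x+d(x)-y\|_L^2\bigr)$, which is nonnegative since $\mu_f, \mu_\Psi \ge 0$ by definition of the convexity parameters. Nothing further is needed.
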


\section{Randomized block-coordinate descent}
\label{sec:RBCD}

In this section we analyze the following randomized block coordinate descent (RBCD) method
for solving problem \eqref{eqn:composite-min}, which was proposed in \cite{RichtarikTakac12}. 
In particular, we extend Nesterov's technique \cite{Nesterov12rcdm} developed for a special case 
of problem \eqref{eqn:composite-min} to work with the general setting and establish some sharper 
expected-value type of converge rate, as well as improved high-probability iteration complexity, than those 
given or implied in \cite{RichtarikTakac12}.

\begin{center}
\begin{boxedminipage}{0.75\textwidth}
\vspace{1ex}
\centerline{\textbf{Algorithm:} RBCD$(x^0)$}
\vspace{1ex}
Repeat for $k=0,1,2,\ldots$
\vspace{-1ex}
\begin{enumerate} \itemsep 0pt
\item Choose $i_k\in\{1,\ldots,n\}$ randomly with a uniform distribution.
\item Update $x^{k+1} = x^{k} + U_{i_k} d_{i_k}(x^{k})$.
\end{enumerate}
\end{boxedminipage}
\end{center}

After~$k$ iterations, the RBCD method generates a random output
$x^k$, which depends on the observed realization of the random
variable
\[
\xi_{k-1} \eqdef \{i_0, i_1, \ldots, i_{k-1}\}.
\]
The following quantity measures the distance between $x^0$ and the optimal solution
set of problem~(\ref{eqn:composite-min}) that will appear in our complexity
results:
\beq \label{R0}
R_0 \eqdef \min\limits_{x^\star\in X^*} \|x^0-x^\star\|_L,
\eeq
where $X^*$ is the set of optimal solutions of problem~(\ref{eqn:composite-min}).

\subsection{Convergence rate of expected values}
The following theorem is a generalization of \cite[Theorem~5]{Nesterov12rcdm},
where the function~$\Psi$ in~\eqref{eqn:composite-min} is restricted to be
the indicator function of a block-separable closed convex set.
Here we extend it to the general case of $\Psi$ being block-separable 
convex functions by employing the machinery of block-wise composite gradient mapping
developed in Section~\ref{sec:tech}.

\begin{theorem} \label{thm:RBCD-rate}
Let $R_0$ be defined in \eqref{R0}, $F^\star$ be the optimal value of
problem~(\ref{eqn:composite-min}),  and $\{x^k\}$ be the sequence generated by the
RBCD method. Then for any $k\geq 0$, the iterate $x^k$ satisfies
\beq \label{general-complexity}
\bE_{\xi_{k-1}} \bigl[ F(x^k) \bigr] - F^\star ~\leq~ \frac{n}{n+k}
\left(\frac{1}{2}R_0^2 + F(x^0)-F^\star \right).
\eeq
Furthermore, if at least one of $f$ and $\Psi$ is strongly convex, i.e.,
$\mu_f+\mu_\Psi>0$, then
\beq \label{strong-complexity}
\bE_{\xi_{k-1}} \bigl[ F(x^k) \bigr] - F^\star ~\leq~
\left(1-\frac{2(\mu_f+\mu_\Psi)}{n(1+\mu_f+2\mu_\Psi)}\right)^k
\left(\frac{1+\mu_\Psi}{2}R_0^2 + F(x^0)-F^\star \right).
\eeq
\end{theorem}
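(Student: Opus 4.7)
The plan is to introduce a Lyapunov function $\phi_k = F(x^k) - F^\star + \alpha\|x^k - x^\star\|_L^2$, where $x^\star\in X^*$ is fixed throughout so as to attain the minimum in the definition of $R_0$ and $\alpha$ is a constant to be chosen. I will derive a one-step conditional inequality $\bE_{i_k}[\phi_{k+1}]\le\rho\,\phi_k$ in the strongly convex case, and an additive telescoping identity in the general case; iterating (via the tower property of conditional expectation) and using $\phi_k\ge F(x^k)-F^\star$ will then deliver both \eqref{general-complexity} and \eqref{strong-complexity}.

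The key computation starts from \eqref{eqn:forward-messy} with $y=x^\star$, $x=x^k$. A direct algebraic expansion based on $g_i(x)=-L_id_i(x)$ and the definitions of $\|\cdot\|_L$ and $\|\cdot\|_L^*$ yields
\[
\|x^k+d(x^k)-x^\star\|_L^2 \;=\; \|x^k-x^\star\|_L^2 + 2\langle g(x^k),x^\star-x^k\rangle + \bigl(\|g(x^k)\|_L^*\bigr)^2,
\]
which, combined with Lemma~\ref{lem:norm-inc} applied to $\Phi(\cdot)=\|\cdot-x^\star\|_L^2$, furnishes a closed form for $\bE_{i_k}[\|x^{k+1}-x^\star\|_L^2]$. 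Adding $\alpha$ times this expansion to \eqref{eqn:forward-messy} and choosing the critical value $\alpha=\tfrac{1+\mu_\Psi}{2}$ makes the cross terms in $\langle g(x^k),x^\star-x^k\rangle$ and $(\|g(x^k)\|_L^*)^2$ cancel \emph{exactly}. What remains is the clean one-step inequality
\[
\bE_{i_k}[\phi_{k+1}]\;\le\;\tfrac{n-1}{n}\bigl(F(x^k)-F^\star\bigr) + \Bigl(\tfrac{1+\mu_\Psi}{2}-\tfrac{\mu_f+\mu_\Psi}{2n}\Bigr)\|x^k-x^\star\|_L^2.
\]

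For \eqref{general-complexity}, I would repeat the same derivation starting from Corollary~\ref{lem:forward-looking-nomu} (equivalently, set $\mu_f,\mu_\Psi$ to zero), which gives $\alpha=\tfrac12$ and the clean simplification $\bE_{i_k}[\phi_{k+1}]\le\phi_k-\tfrac1n(F(x^k)-F^\star)$. Taking total expectation, telescoping over $k$, and invoking Corollary~\ref{cor:monotone} (whence $\bE[F(x^k)-F^\star]$ is nonincreasing, so $\sum_{j=0}^{k-1}\bE[F(x^j)-F^\star]\ge k\,\bE[F(x^k)-F^\star]$) then gives $\bE[F(x^k)-F^\star]\le\tfrac{n}{n+k}\phi_0$, which is exactly~\eqref{general-complexity} upon substituting $\phi_0=F(x^0)-F^\star+\tfrac12R_0^2$.

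For \eqref{strong-complexity}, the remaining task---and the main obstacle---is converting the one-step inequality into a true geometric contraction $\bE_{i_k}[\phi_{k+1}]\le\rho\,\phi_k$ with the sharp $\rho$ claimed. Writing $a=F(x^k)-F^\star$ and $b=\|x^k-x^\star\|_L^2$, strong convexity of $F$ at $x^\star$ supplies the coupling $a\ge\tfrac{\mu_f+\mu_\Psi}{2}b$, and the smallest admissible $\rho$ equals the maximum of the ratio $\bigl(\tfrac{n-1}{n}a+Cb\bigr)/(a+\alpha b)$, where $C=\tfrac{1+\mu_\Psi}{2}-\tfrac{\mu_f+\mu_\Psi}{2n}$, over all nonnegative $a,b$ obeying that coupling. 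As a function of $t=a/b$ this ratio has derivative proportional to $\tfrac{n-1}{n}\alpha-C=\tfrac{\mu_f-1}{2n}$, which is nonpositive \emph{precisely because} $\mu_f\le 1$ (cf.~\eqref{eqn:lip-ineq}); hence the max is attained at the boundary $t=\tfrac{\mu_f+\mu_\Psi}{2}$, and direct evaluation there gives exactly $\rho=1-\tfrac{2(\mu_f+\mu_\Psi)}{n(1+\mu_f+2\mu_\Psi)}$. Iterating this contraction and using $\phi_0\le F(x^0)-F^\star+\tfrac{1+\mu_\Psi}{2}R_0^2$ (from the choice of $x^\star$) then yields~\eqref{strong-complexity}.
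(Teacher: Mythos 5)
Your proposal is correct and follows essentially the same route as the paper: the Lyapunov function $\frac{1+\mu_\Psi}{2}\|x^k-x^\star\|_L^2+F(x^k)-F^\star$, the one-step inequality obtained by combining \eqref{eqn:forward-messy} with the expansion of $\bE_{i_k}\|x^{k+1}-x^\star\|_L^2$, the telescoping-plus-monotonicity argument for \eqref{general-complexity}, and the coupling $F(x^k)-F^\star\ge\frac{\mu_f+\mu_\Psi}{2}\|x^k-x^\star\|_L^2$ for \eqref{strong-complexity} all coincide with the paper's proof. The only cosmetic difference is that you extract the contraction factor by maximizing the ratio in $t=a/b$, whereas the paper writes the subtracted term as a convex combination with weight $\beta=\frac{2(\mu_f+\mu_\Psi)}{1+\mu_f+2\mu_\Psi}$; the two computations are algebraically equivalent and both rest on $\mu_f\le 1$.
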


\begin{proof}
Let $x^\star$ be an arbitrary optimal solution of~(\ref{eqn:composite-min}).
Denote
\[
r_k^2 ~=~ \| x^k - x^\star\|_L^2
~=~ \sum_{i=1}^n L_i \langle x^k_i-x^\star_i, x^k_i-x^\star_i \rangle.
\]
Notice that $x^{k+1} = x^{k} + U_{i_k} d_{i_k}(x^{k})$. Thus we have
\[
r_{k+1}^2 ~=~ r_k^2 + 2 L_{i_k} \langle d_{i_k}(x^k), x^k_{i_k} -x^\star_{i_k}
\rangle + L_{i_k} \|d_{i_k}(x^k)\|^2 .
\]
Multiplying both sides by $1/2$ and taking expectation with respect to~$i_k$ yield
\begin{eqnarray}
\bE_{i_k} \biggl[ \frac{1}{2} r_{k+1}^2 \biggr]
&=& \frac{1}{2} r_k^2 +  \frac{1}{n} \left( \sum_{i=1}^n
     L_i \langle d_i(x^k), x^k_i -x^\star_i\rangle
    +\frac{1}{2} \sum_{i=1}^n \frac{1}{L_i} \|g_i(x^k)\|^2 \right) \nonumber \\
&=& \frac{1}{2} r_k^2 + \frac{1}{n} \left( \langle g(x^k), x^\star - x^k
    \rangle + \frac{1}{2}\bigl(\|g(x^k)\|_L^*\bigr)^2 \right) .
\label{eqn:expected-rk}
\end{eqnarray}
Using Corollary~\ref{lem:forward-looking-nomu}, we obtain
\[
\bE_{i_k} \biggl[\frac{1}{2} r_{k+1}^2\biggr] ~\leq~ \frac{1}{2} r_k^2
+\frac{1}{n} F^\star + \frac{n-1}{n}F(x^k) - \bE_{i_k} F(x^{k+1}).
\]
By rearranging terms, we obtain that for each $k\geq 0$,
\[
\bE_{i_k} \left[ \frac{1}{2} r_{k+1}^2 + F(x^{k+1})-F^\star\right]
~\leq~ \left( \frac{1}{2} r_k^2 + F(x^k)-F^\star \right)
-\frac{1}{n} \left(F(x^k)-F^\star \right).
\]
Taking expectation with respect to $\xi_{k-1}$ on both sides of the above
inequality, we have
\[
\bE_{\xi_k} \left[ \frac{1}{2}r_{k+1}^2 + F(x^{k+1})-F^\star \right] \ \le
\ \bE_{\xi_{k-1}}\left[ \frac{1}{2} r_k^2 + F(x^k)-F^\star \right]
-\frac{1}{n} \bE_{\xi_{k-1}}\left[F(x^k)-F^\star \right].
\]
Applying this inequality recursively and using the fact that
$\bE_{\xi_k}\bigl[ F(x^j)\bigr]$ is monotonically decreasing
for $j=0,\ldots,k+1$ (see Corollary~\ref{cor:monotone}), we further obtain that
\begin{eqnarray*}
\bE_{\xi_k} \bigl[ F(x^{k+1}) \bigr] -F^\star
&\leq& \bE_{\xi_k} \left[ \frac{1}{2}r_{k+1}^2 + F(x^{k+1})-F^\star \right] \\
&\leq& \frac{1}{2} r_0^2 + F(x^0)-F^\star - \frac{1}{n} \sum_{j=0}^k
       \left(\bE_{\xi_k}\bigl[ F(x^j)\bigr] -F^\star\right) \\
&\leq& \frac{1}{2} r_0^2 + F(x^0)-F^\star - \frac{k+1}{n}
       \left(\bE_{\xi_k}\bigl[ F(x^{k+1})\bigr] -F^\star\right).
\end{eqnarray*}
This leads to
\[
\bE_{\xi_k} \bigl[ F(x^{k+1}) \bigr] - F^\star ~\leq~ \frac{n}{n+k+1}
\left(\frac{1}{2}\|x^0-x^\star\|_L^2 + F(x^0)-F^\star \right),
\]
which together with the arbitrariness of $x^\star$ and the definition of
$R_0$ yields \eqref{general-complexity}.

Next we prove \eqref{strong-complexity} under the strong convexity assumption
$\mu_f+\mu_\Psi>0$. Using \eqref{eqn:forward-messy} and \eqref{eqn:expected-rk},
we obtain that
\begin{eqnarray}
\bE_{i_k} \left[ \frac{1+\mu_\Psi}{2} r_{k+1}^2 + F(x^{k+1}) - F^\star\right]
&\leq& \left(\frac{1+\mu_\Psi}{2} r_k^2 + F(x^k) - F^\star\right) \nonumber\\
&& - \frac{1}{n}\left(\frac{\mu_f+\mu_\Psi}{2} r_k^2 + F(x^k) - F^\star\right).
\label{eqn:expected-rk-F}
\end{eqnarray}
By strong convexity of~$F$, we have
\[
\frac{\mu_f+\mu_\Psi}{2} r_k^2 + F(x^k) - F^\star
~\geq~ \frac{\mu_f+\mu_\Psi}{2} r_k^2 + \frac{\mu_f+\mu_\Psi}{2} r_k^2
~=~ (\mu_f+\mu_\Psi) r_k^2 .
\]
Define
\[
\beta = \frac{2(\mu_f+\mu_\Psi)}{1+\mu_f+2\mu_\Psi}.
\]
We have $0<\beta\leq 1$ due to $\mu_f+\mu_\Psi>0$ and $\mu_f\leq 1$.
Then
\begin{eqnarray*}
\frac{\mu_f+\mu_\Psi}{2} r_k^2 + F(x^k) - F^\star
&\geq& \beta\left(\frac{\mu_f+\mu_\Psi}{2} r_k^2 + F(x^k) - F^\star\right)
       +(1-\beta) (\mu_f+\mu_\Psi) r_k^2 \\
&=& \beta \left(\frac{1+\mu_\Psi}{2} r_k^2 + F(x^k) - F^\star\right).
\end{eqnarray*}
Combining the above inequality with~(\ref{eqn:expected-rk-F}) gives
\[
\bE_{i_k} \left[ \frac{1+\mu_\Psi}{2} r_{k+1}^2 + F(x^{k+1}) - F^\star\right]
~\leq~ \left(1-\frac{\beta}{n}\right)
\left(\frac{1+\mu_\Psi}{2} r_k^2 + F(x^k) - F^\star\right)
\]
Taking expectation with respect $\xi_{k-1}$ on both sides of the above relation, we have
\[
\bE_{\xi_k} \left[ \frac{1+\mu_\Psi}{2} r_{k+1}^2 + F(x^{k+1}) - F^\star\right]
~\leq~ \left(1-\frac{\beta}{n}\right)^{k+1}
\left(\frac{1+\mu_\Psi}{2} r_0^2 + F(x^0) - F^\star\right),
\]
which together with the arbitrariness of $x^\star$ and the definition of
$R_0$ leads to \eqref{strong-complexity}.
\end{proof}

\gap

We have the following remarks on comparing the results in Theorem~\ref{thm:RBCD-rate} with those in \cite{RichtarikTakac12}.
\begin{itemize}
\item 
For the {\it general} setting of problem \eqref{eqn:composite-min}, expected-value type of
convergence rate is not presented explicitly in \cite{RichtarikTakac12}. Nevertheless, it can be
derived straightforwardly from the following relation that was proved in \cite[Theorem~5]{RichtarikTakac12}:
\beq \label{Rich-bdd}
\bE_{i_k}[\Delta_{k+1}] \ \le \ \Delta_k - \frac{\Delta^2_k}{2nc}, \quad\quad \forall k \ge 0,
\eeq
where $\Delta_k := F(x^k)-F^\star$, and
\beqa
c &:=& \max\{\bar R_0^2,~F(x^0)-F^\star\}, \label{eqn:c} \\
\bar R_0 &:=& \max_x \Bigl\{ \max_{x^\star \in X^*} \|x-x^\star\|_L:
 F(x) \le F(x^0) \Bigr\}. 
\label{c-bR0}
\eeqa
Taking expectation with respect to $\xi_{k-1}$ on both sides of \eqref{Rich-bdd}, one can have
\[
\bE_{\xi_k}[\Delta_{k+1}] \ \le \ \bE_{\xi_{k-1}}[\Delta_k] - \frac{1}{2nc} \left(\bE_{\xi_{k-1}}[\Delta_k]\right)^2,
\quad\quad \forall k \ge 0.
\]
By this relation and a similar argument as used in the proof of \cite[Theorem~1]{Nesterov12rcdm}, one can
obtain that
\beq \label{Rich-expbdd}
\bE_{\xi_{k-1}} [F(x^k)]-F^\star \ \le \ \frac{2nc(F(x^0)-F^\star)}{k(F(x^0)-F^\star)+2nc},
\quad\quad \forall k \ge 0.
\eeq
Let $a$ and $b$ denote the right-hand side of \eqref{general-complexity} and \eqref{Rich-expbdd},
respectively. By the definition of $c$ and the relation $\bar R_0 \ge R_0$,  we can see that when
$k$ is sufficiently large,
\[
\frac{b}{a} \ \approx \ \frac{2c}{\frac{1}{2}R_0^2 + F(x^0)-F^\star} \ \ge \ \frac{4}{3}.
\]
Therefore, our expected-value type of convergence rate is better 
by at least a factor of $4/3$ asymptotically,
and the improvement can be much larger if~$\bar R_0$ is much larger
than~$R_0$.

\item 
For the {\it special} case of \eqref{eqn:composite-min} where at least one of $f$ and $\Psi$ is 
strongly convex, i.e., $\mu_f+\mu_\Psi>0$, Richt\'{a}rik and Tak\'{a}\v{c} \cite[Theorem~7]{RichtarikTakac12} 
showed that for all $k \ge 0$, there holds
\[
\bE_{\xi_{k-1}} \bigl[ F(x^k) \bigr] - F^\star ~\leq~
\left(1-\frac{\mu_f+\mu_\Psi}{n(1+\mu_\Psi)}\right)^k
\left(F(x^0)-F^\star \right).
\]
It is not hard to observe that
\beq \label{compare-coef}
\frac{2(\mu_f+\mu_\Psi)}{n(1+\mu_f+2\mu_\Psi)} \ > \ \frac{\mu_f+\mu_\Psi}{n(1+\mu_\Psi)}.
\eeq
It then follows that for sufficiently large $k$, one has
\begin{eqnarray*}
&& \left(1-\frac{2(\mu_f+\mu_\Psi)}{n(1+\mu_f+2\mu_\Psi)}\right)^k
\left(\frac{1+\mu_\Psi}{2}R_0^2 + F(x^0)-F^\star \right) \\
&\leq& \left(1-\frac{2(\mu_f+\mu_\Psi)}{n(1+\mu_f+2\mu_\Psi)}\right)^k
\left(\frac{1+\mu_f+\mu_\Psi}{\mu_f+\mu_\Psi}\right) 
\left(F(x^0)-F^\star \right) \\
& \ll & \left(1-\frac{\mu_f+\mu_\Psi}{n(1+\mu_\Psi)}\right)^k
\left(F(x^0)-F^\star \right).
\end{eqnarray*}
Therefore, our convergence rate \eqref{strong-complexity} is much sharper than their rate
for sufficiently large $k$.
\end{itemize}

\subsection{High probability complexity bound}

By virtue of Theorem \ref{thm:RBCD-rate} we can also derive a sharper iteration complexity
for a {\it single run} of the RBCD method for obtaining an $\epsilon$-optimal solution with
high probability than the one given in \cite[Theorems~5 and 7]{RichtarikTakac12}.

\begin{theorem} \label{prob-complexity}
 Let $R_0$ be defined in \eqref{R0} and $\{x^k\}$ be the sequence generated
 by the RBCD method. Let $0<\epsilon <F(x^0)-F^\star$ and $\rho \in (0,1)$
 be chosen arbitrarily.
 \bi
 \item[(i)] For all $k \ge K$, there holds
 \beq \label{high-prob}
 \bP(F(x^k)-F^\star \le \epsilon) \ \ge \ 1-\rho,
 \eeq
 where
 \beq
 K :=  \frac{2nc}{\epsilon}\left(1+\log\left(\frac{R^2_0+2[F(x^0)-F^\star]}{4c\rho}\right)\right)+2-n.
 \label{general-K}
 \eeq
\item[(ii)] Furthermore, if at least one of $f$ and $\Psi$ is strongly convex, i.e., $\mu_f+\mu_\Psi>0$, then
\eqref{high-prob} holds when $k \ge \tilde K$, where
\[
\tilde K := \frac{n(1+\mu_f+2\mu_\Psi)}{2(\mu_f+\mu_\Psi)}\log\left(\frac{\frac{1+\mu_\Psi}{2}R_0^2 + F(x^0)-F^\star}{\rho\eps}\right)
\]
 \ei
\end{theorem}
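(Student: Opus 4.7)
The plan is to combine the expected-value bounds in Theorem~\ref{thm:RBCD-rate} with Markov's inequality, the deterministic monotonicity of $\{F(x^k)\}$, and, for part~(i), a truncation/stopping-time trick that turns the additive supermartingale underlying the proof of Theorem~\ref{thm:RBCD-rate} into a geometric one on the event that $\eps$-optimality has not yet been reached.

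Part~(ii) is immediate: since $F(x^k)-F^\star\ge 0$, Markov's inequality gives
$\bP(F(x^k)-F^\star>\eps)\le \bE_{\xi_{k-1}}[F(x^k)-F^\star]/\eps$. Substituting the geometric bound \eqref{strong-complexity}, imposing that the right-hand side be at most $\rho$, taking logarithms, and using the elementary estimate $-\log(1-\beta/n)\ge \beta/n$ with $\beta := 2(\mu_f+\mu_\Psi)/(1+\mu_f+2\mu_\Psi)\in(0,1]$, solves explicitly for $k$ and produces the advertised~$\tilde K$.

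For part~(i), I would first observe that the block-wise descent bound stated right after Corollary~\ref{cor:monotone} gives $F(x^{k+1})\le F(x^k)$ deterministically, so every iterate lies in the sublevel set $\{x:F(x)\le F(x^0)\}$; in particular $\|x^k-x^\star\|_L\le \bar R_0$ with $\bar R_0$ as in \eqref{c-bR0}, and the stopping time $\tau := \min\{k : F(x^k)-F^\star\le\eps\}$ satisfies $\{F(x^k)-F^\star>\eps\}=\{\tau>k\}$, so it suffices to bound $\bP(\tau>k)$. Writing $\Delta_k := F(x^k)-F^\star$ and $\phi_k := \tfrac12\|x^k-x^\star\|_L^2+\Delta_k$ for a fixed $x^\star\in X^*$, the proof of Theorem~\ref{thm:RBCD-rate} already established the supermartingale inequality $\bE_{i_k}[\phi_{k+1}]\le \phi_k-\Delta_k/n$ (combining \eqref{eqn:expected-rk} with Corollary~\ref{lem:forward-looking-nomu}). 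On the event $\{\tau>k\}$ one has $\Delta_k>\eps$, while $\phi_k$ is bounded above by a constant multiple of $c$ defined in \eqref{eqn:c}, so a direct ratio estimate yields $\Delta_k\ge \gamma\,\phi_k$ with $\gamma=\Theta(\eps/c)$, turning the additive recursion into the multiplicative one $\bE_{i_k}[\phi_{k+1}]\le (1-\gamma/n)\phi_k$ on $\{\tau>k\}$. Setting $\tilde\phi_k:=\phi_k\,\mathbf{1}\{\tau>k\}$, which vanishes once $\tau$ is reached by the deterministic monotonicity above, an induction then gives $\bE[\tilde\phi_k]\le (1-\gamma/n)^k\phi_0$, and Markov at threshold~$\eps$ yields $\bP(\tau>k)\le(1-\gamma/n)^k\phi_0/\eps$. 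Taking logarithms and rearranging produces a bound of the form of \eqref{general-K}.

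The main technical obstacle will be pinning down the sharp constants in \eqref{general-K}. A naive execution of the ratio estimate above leaves a $\log(\phi_0/(\rho\eps))$ in the logarithm rather than the claimed $\log((R_0^2+2[F(x^0)-F^\star])/(4c\rho))$; the two differ by a spurious $\log(2c/\eps)$ term. To shave this off I expect one needs to separate the analysis into a large-$\phi_k$ phase, where $\Delta_k$ is already a fixed fraction of $\phi_k$ and the contraction is the $\eps$-free $1-\Theta(1/n)$, and a small-$\phi_k$ phase, where the $\eps/c$-rate kicks in, and to splice the two carefully to recover the clean $2nc/\eps$ prefactor together with the ``$+1$'' and the $4c\rho$ denominator appearing in \eqref{general-K}.
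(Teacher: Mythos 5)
Your part (ii) is correct and coincides with the paper's argument: Markov's inequality applied to \eqref{strong-complexity}, followed by $(1-\beta/n)^{\tilde K}\le e^{-\beta \tilde K/n}$ and the definition of $\tilde K$.

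For part (i) you have correctly diagnosed the obstacle --- a single geometric contraction of the truncated quantity yields $k\gtrsim\frac{nc}{\eps}\log\frac{\phi_0}{\rho\eps}$, which carries a spurious $\frac{nc}{\eps}\log\frac{1}{\eps}$ term --- but the repair you propose does not close the gap. You want a ``large-$\phi_k$ phase'' in which $\Delta_k$ is a fixed fraction of $\phi_k=\frac12 r_k^2+\Delta_k$, so that the supermartingale inequality $\bE_{i_k}[\phi_{k+1}]\le\phi_k-\Delta_k/n$ gives an $\eps$-free contraction $1-\Theta(1/n)$. Without strong convexity no such phase exists: $\Delta_k$ can be arbitrarily small while $r_k^2$ stays bounded away from zero, so $\Delta_k/\phi_k$ admits no lower bound in terms of the magnitude of $\phi_k$ alone. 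The paper's first phase is of a different nature: it contracts nothing geometrically, but simply invokes the already-proved sublinear bound \eqref{general-complexity} together with $\Delta^\eps_k\le\Delta_k$ to conclude $\bE_{\xi_{K_1-1}}[\Delta^\eps_{K_1}]\le t\eps$ after $K_1=\bigl\lceil\frac{n}{t\eps}\bigl(\frac12R_0^2+F(x^0)-F^\star\bigr)\bigr\rceil-n$ iterations, where $t>0$ is a free parameter. The second phase then applies the geometric decay $\bE_{i_k}[\Delta^\eps_{k+1}]\le\bigl(1-\frac{\eps}{2nc}\bigr)\Delta^\eps_k$ --- which the paper imports from the one-step inequality \eqref{Rich-bdd} of Richt\'{a}rik and Tak\'{a}\v{c} rather than deriving it from its own supermartingale --- for $K_2=\bigl\lceil\frac{2nc}{\eps}\log\frac{t}{\rho}\bigr\rceil$ further iterations, using monotonicity of $\bE_{\xi_{k-1}}[\Delta^\eps_k]$ to cover all $k\ge K_1+K_2$, and finally optimizes $K(t)=K_1(t)+K_2(t)$ over $t$; the minimizer $t^*=\bigl(\frac12R_0^2+F(x^0)-F^\star\bigr)/(2c)$ produces exactly the prefactor $\frac{2nc}{\eps}$, the ``$+1$'', and the $4c\rho$ denominator in \eqref{general-K}. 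So your outline needs its first phase replaced by this sublinear-rate argument, plus the optimization over the switching level $t$, before it yields the stated $K$.
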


\begin{proof}
(i) For convenience, let $\Delta_k = F(x^k)-F^\star$ for all $k$.
%As is shown in the proof of \cite[Theorem~5]{RichtarikTakac12}, the following holds:
%\beq \label{Rich-bdd}
%\bE_{i_k}[\Delta_{k+1}] \ \le \ \Delta_k - \frac{\Delta^2_k}{2nc}, \quad\quad \forall k \ge 0.
%\eeq
Define the truncated sequence $\{\Delta^{\epsilon}_k\}$ as follows:
\[
\Delta^{\epsilon}_k = \left\{
\ba{ll}
\Delta_k & \ \mbox{if} \ \Delta_k \ge \epsilon, \\
0 & \ \mbox{otherwise}.
\ea
\right.
\]
Using \eqref{Rich-bdd} and the same argument as used in the proof of \cite[Theorem~1]{RichtarikTakac12},
one can have
\[
\bE_{i_k}[\Delta^\eps_{k+1}] \ \le \ \left(1-\frac{\eps}{2nc}\right)\Delta^\eps_k, \quad\quad \forall k \ge 0.
\]
Taking expectation with respect to $\xi_{k-1}$ on both sides of the above relation, we obtain that
\beq \label{Rich-bdd1}
\bE_{\xi_k}[\Delta^\eps_{k+1}] \ \le \ \left(1-\frac{\eps}{2nc}\right)\bE_{\xi_{k-1}}[\Delta^\eps_k], \quad\quad \forall k \ge 0.
\eeq
In addition, using \eqref{general-complexity} and the relation $\Delta^\eps_k \le \Delta_k$, we have
\beq \label{We-bdd}
\bE_{\xi_{k-1}}[\Delta^\eps_k]  ~\le~
\frac{n}{n+k} \left(\frac{1}{2}R_0^2 + F(x^0)-F^\star \right), \quad\quad \forall k \ge 0.
\eeq
For any $t>0$, let
\[
K_1 = \left\lceil\frac{n}{t\eps} \left(\frac{1}{2}R_0^2 + F(x^0)-F^\star\right)\right\rceil - n, \quad\quad
K_2 = \left\lceil\frac{2nc}{\eps} \log\left(\frac{t}{\rho}\right)\right\rceil.
\]
It follows from \eqref{We-bdd} that $\bE_{\xi_{K_1-1}}[\Delta^\eps_{K_1}] ~\le~ t\eps$, which together with
\eqref{Rich-bdd1} implies that
\[
\bE_{\xi_{K_1+K_2-1}}[\Delta^\eps_{K_1+K_2}] ~\le~ \left(1-\frac{\eps}{2nc}\right)^{K_2}\bE_{\xi_{K_1-1}}[\Delta^\eps_{K_1}]
~\le~ \left(1-\frac{\eps}{2nc}\right)^{K_2} t \eps~\le~ \rho \eps.
\]
Notice from \eqref{Rich-bdd1} that $\{\bE_{\xi_{k-1}}[\Delta^\eps_k]\}$ is decreasing. Hence, we have
\beq \label{bdd-deltak}
\bE_{\xi_{k-1}}[\Delta^\eps_k] \le \rho \eps, \quad \forall k \ge K(t),
\eeq
where
\[
K(t) := \frac{n}{t\eps} \left(\frac{1}{2}R_0^2 + F(x^0)-F^\star\right) + \frac{2nc}{\eps} \log\left(\frac{t}{\rho}\right)+2-n.
\]
It is not hard to verify that
\[
t^* ~:=~ \frac{\frac12 R^2_0+F(x^0)-F^\star}{2c} ~=~ \arg\min_{t>0} K(t).
\]
Also, one can observe from \eqref{general-K} that $K \ge K(t^*)$, which together with \eqref{bdd-deltak}
implies that
\[
\bE_{\xi_{k-1}}[\Delta^\eps_k] ~\le~ \rho \eps, \quad \forall k \ge K.
\]
Using this relation and Markov inequality, we obtain that
\[
\bP(F(x^k)-F^\star > \epsilon) \ = \ \bP(\Delta_k > \eps) \ = \  \bP(\Delta^\eps_k > \eps)
\ \le \ \frac{\bE_{\xi_{k-1}}[\Delta^\eps_k]}{\eps} \ \le \ \rho, \quad \forall k \ge K,
\]
which immediately implies statement (i) holds.

(ii) 
%Let $\tilde K$ be defined above. 
Using the Markov inequality, the inequality \eqref{strong-complexity} and
the definition of $\tilde K$, we obtain that for any $k \ge \tilde K$,
%\[
%\ba{lcl}
%\bP(F(x^k)-F^\star > \epsilon) & \le & \frac{\bE_{\xi_{k-1}}[F(x^k)-F^\star]}{\eps}
%\ \le \  \frac{1}{\eps}\left(1-\frac{2(\mu_f+\mu_\Psi)}{n(1+\mu_f+2\mu_\Psi)}\right)^{\tilde K}
%\left(\frac{1+\mu_\Psi}{2}R_0^2 + F(x^0)-F^\star \right), \\ [8pt]
%& \le &  \frac{1}{\eps} \exp\left(-\frac{2(\mu_f+\mu_\Psi)\tilde K}{n(1+\mu_f+2\mu_\Psi)}\right)
%\left(\frac{1+\mu_\Psi}{2}R_0^2 + F(x^0)-F^\star \right) \ \le \ \rho
%\ea
%\]
\beqas
\bP(F(x^k)-F^\star > \epsilon) 
& \le & \frac{\bE_{\xi_{k-1}}[F(x^k)-F^\star]}{\eps} \\
& \le &  \frac{1}{\eps}\left(1-\frac{2(\mu_f+\mu_\Psi)}{n(1+\mu_f+2\mu_\Psi)}\right)^{\tilde K}
\left(\frac{1+\mu_\Psi}{2}R_0^2 + F(x^0)-F^\star \right) \\
& \le &  \frac{1}{\eps} \exp\left(-\frac{2(\mu_f+\mu_\Psi)\tilde K}{n(1+\mu_f+2\mu_\Psi)}\right)
\left(\frac{1+\mu_\Psi}{2}R_0^2 + F(x^0)-F^\star \right) \\
& \le & \rho
\eeqas
and hence statement (ii) holds.
\end{proof}

\gap

We make the following remarks in comparing our results in 
Theorem~\ref{prob-complexity} with those in \cite{RichtarikTakac12}.
\begin{itemize}
\item
For any $0<\eps < F(x^0)-F^\star$ and $\rho \in (0,1)$, Richt\'{a}rik and Tak\'{a}\v{c}
\cite[Theorem~5]{RichtarikTakac12} showed that \eqref{high-prob} holds for all $k \ge \bar K$, where
\[
\bar K =  \frac{2nc}{\epsilon}\left(1+\log\frac{1}{\rho}\right)
+2-\frac{2nc}{F(x^0)-F^\star}
\]
and $c$ is given in \eqref{eqn:c}. 
Using the definitions of $c$ and $R_0$ and the fact
$R_0 \le \bar R_0$, one can observe that
\[
\tau: = \frac{R^2_0+2[F(x^0)-F^\star]}{4c} \ \le \ \frac{3}{4}.
\]
By the definitions of $K$ and $\bar K$, we have that for sufficiently small $\eps>0$,
\[
K - \bar K \ \approx \  \frac{2nc\log \tau}{\epsilon}  \ \le \ -\frac{2nc\log(4/3)}{\epsilon} .
\]
%This together with \eqref{general-K} implies that $K < \bar K$ when $\rho$ is sufficiently small.
In addition, by the definitions of $R_0$ and $\bar R_0$, one can see that $R_0$ can be much smaller
than $\bar R_0$ and thus $\tau$ can be very small. It follows from the above relation that $K$
can be substantially smaller than $\bar K$. 
%and hence our iteration complexity is much tighter when $\eps$ is sufficiently small.

\item 
For a {\it special} case of \eqref{eqn:composite-min} where at least one of $f$ and $\Psi$ is strongly convex,
i.e., $\mu_f+\mu_\Psi>0$, Richt\'{a}rik and Tak\'{a}\v{c} \cite[Theorem~8]{RichtarikTakac12} showed that
\eqref{high-prob} holds for all $k \ge \hat K$, where
\[
\hat K :=  \frac{n(1+\mu_\Psi)}{\mu_f+\mu_\Psi}\log\left(\frac{F(x^0)-F^\star}{\rho\eps}\right).
\]
We then see that when $\rho$ or $\eps$ is sufficiently small,
\[
\frac{\tilde K}{\hat K} \ \approx \ \frac{1+\mu_f+2\mu_\Psi}{2(1+\mu_\Psi)}  \ \le \ 1
\]
due to $0\le \mu_f \le 1$. When $\mu_f <1$, we have $\tilde K \le \tilde \tau \hat K$ for some
$\tilde \tau \in (0,1)$ and thus our complexity bound is tighter when $\rho$ or $\eps$ is
sufficiently small.
\end{itemize}

As discussed in \cite[Section~2]{RichtarikTakac12}, the number of iterations required by the RBCD
method for obtaining an $\epsilon$-optimal solution with high probability can also be estimated by
using a  {\it multiple-run} strategy, each run with an independently generated random sequence
$\{i_0,i_1, \ldots\}$. We next derive such an iteration complexity.

\begin{theorem} \label{multi-run-prob}
Let $0<\epsilon <F(x^0)-F^\star$ and $\rho \in (0,1)$ be arbitrarily chosen, and let
$r = \lceil \log(1/\rho)\rceil$. Suppose that we run the RBCD method starting with $x^0$
for $r$ times independently, each time for the same number of iterations $k$. Let $x^k_{(j)}$ denote
the output by the RBCD at the $k$th iteration of the $j$th run. Then there holds:
\[
\bP\left(\min_{1 \le j \le r} F(x^k_{(j)})-F^\star \ \le \ \eps\right) \ \ge \ 1-\rho
\]
for any $k \ge \underline K$, where
\[
\underline K := \left\lceil\frac{en}{\eps} \left(\frac{1}{2}R_0^2 + F(x^0)-F^\star\right)\right\rceil - n.
\]
\end{theorem}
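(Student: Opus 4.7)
The plan is to combine the expected-value convergence rate established in Theorem~\ref{thm:RBCD-rate} with Markov's inequality to bound the single-run failure probability, and then exploit independence across the $r$ runs to amplify confidence.

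First, I would focus on a single run. By \eqref{general-complexity}, for every $k \ge 0$ we have
\[
\bE_{\xi_{k-1}}[F(x^k)-F^\star] \ \le \ \frac{n}{n+k}\left(\tfrac{1}{2}R_0^2 + F(x^0)-F^\star\right).
\]
The definition $\underline K = \lceil (en/\eps)(\tfrac{1}{2}R_0^2 + F(x^0)-F^\star)\rceil - n$ is designed precisely so that $n+\underline K \ge (en/\eps)(\tfrac{1}{2}R_0^2 + F(x^0)-F^\star)$. Substituting, I get $\bE_{\xi_{k-1}}[F(x^k)-F^\star]\le \eps/e$ for every $k\ge\underline K$. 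Markov's inequality then yields
\[
\bP\bigl(F(x^k)-F^\star > \eps\bigr) \ \le \ \frac{\bE_{\xi_{k-1}}[F(x^k)-F^\star]}{\eps} \ \le \ \frac{1}{e}.
\]

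Second, since the $r$ runs are performed with independently generated random sequences $\{i_0,i_1,\ldots\}$, the events $\{F(x^k_{(j)})-F^\star > \eps\}$ for $j=1,\ldots,r$ are mutually independent. Consequently,
\[
\bP\Bigl(\min_{1 \le j\le r} F(x^k_{(j)})-F^\star > \eps\Bigr) \ = \ \prod_{j=1}^r \bP\bigl(F(x^k_{(j)})-F^\star > \eps\bigr) \ \le \ e^{-r}.
\]
Since $r = \lceil \log(1/\rho) \rceil \ge \log(1/\rho)$, we obtain $e^{-r} \le \rho$, and complementing gives the desired bound $\bP(\min_j F(x^k_{(j)})-F^\star \le \eps) \ge 1-\rho$.

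There is no real obstacle here: the argument is essentially a textbook confidence-boosting trick, and the only ingredient beyond Markov and independence is to verify that the constant $e$ appearing in $\underline K$ matches the factor needed for the bound $(1/e)^r \le \rho$ to kick in. The elegance of the choice $r = \lceil\log(1/\rho)\rceil$ is that it exactly compensates for a per-run failure probability of $1/e$, which in turn is what the factor of $e$ in $\underline K$ buys from the expected-value bound.
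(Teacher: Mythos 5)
Your proposal is correct and follows essentially the same route as the paper: apply Markov's inequality to the expected-value bound \eqref{general-complexity}, note that the choice of $\underline K$ forces the per-run failure probability below $1/e$, and then multiply across the $r$ independent runs to get $e^{-r}\le\rho$. No gaps.
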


\begin{proof}
Let $\xi^{(j)}_{k-1}=\left\{i^{(j)}_0,i^{(j)}_1,\ldots,i^{(j)}_{k-1}\right\}$ denote the random
sequence used in the $j$th run. Using Markov inequality, \eqref{general-complexity} and the definition of
$\underline K$, we obtain that for any $k \ge \underline K$,
\[
\bP\left(F(x^k_{(j)})-F^\star \ > \ \eps\right) \ \le \ \frac{\bE_{\xi^{(j)}_{k-1}}[F(x^k_{(j)})-F^\star]}{\eps}
\ \le \ \frac{n}{(n+k)\eps}
\left(\frac{1}{2}R_0^2 + F(x^0)-F^\star \right) \ \le \ \frac{1}{e}.
\]
This together with the definition of $r$ implies that
\[
\bP\left(\min_{1 \le j \le r} F(x^k_{(j)})-F^\star \ > \ \eps\right) \ = \
 \Pi^r_{j=1} \bP\left(F(x^k_{(j)})-F^\star \ > \ \eps\right) \ \le \ \frac{1}{e^r} \ \le \ \rho,
\]
and hence the conclusion holds.
\end{proof}

\gap

\begin{remark} 
From Theorem~\ref{multi-run-prob}, one can see that the total number of iterations by
RBCD with a multiple-run strategy for obtaining an $\eps$-optimal solution is at most
\[
K^{\rm M} := \left(\left\lceil\frac{2en}{\eps} \left(R_0^2 + 2(F(x^0)-F^\star)\right)\right\rceil - n
\right)\left\lceil \log\frac{1}{\rho}\right\rceil.
\]
It was implicitly established in \cite{RichtarikTakac12}  that an $\eps$-optimal solution
can be found by RBCD with a multiple-run strategy in at most
\[
\bar K^{\rm M} :=  \left\lceil\frac{2enc}{\epsilon}-\frac{2nc}{F(x^0)-F^\star}\right\rceil
\left\lceil \log\frac{1}{\rho}\right\rceil
\]
iterations. When $\rho$ or $\eps$ is sufficiently small, we have
\[
\frac{K^{\rm M}}{\bar K^{\rm M}} \ \approx \ \frac{R_0^2 + 2(F(x^0)-F^\star)}{c}.
\]
Recall that  $\bar R_0$ can be much larger than $R_0$, which together with \eqref{c-bR0}
implies that $c$ can be much larger than $R_0^2 + 2(F(x^0)-F^\star)$. It follows from the
above relation that when $\rho$ or $\eps$ is sufficiently small, $K^{\rm M}$ can be substantially
smaller than $\bar K^{\rm M}$.
% and thus our complexity bound can be much tighter than the one given in \cite{RichtarikTakac12}.
\end{remark}

\section{Accelerated randomized coordinate descent}
\label{sec:accl-rcd}
In this section, we restrict ourselves to the unconstrained
smooth minimization problem
\begin{equation}\label{eqn:unconstr-min}
\min_{x\in\Re^N} \ f(x),
\end{equation}
where $f$ is convex in $\Re^N$ with convexity parameter $\mu = \mu_f \ge 0$  with respect to
the norm $\|\cdot\|_L$ and satisfies Assumption~\ref{asmp:coord-smooth}. It then follows from
 \eqref{eqn:lip-ineq} that $\mu \le 1$. Our aim is to analyze the convergence rate of the
following accelerated randomized coordinate descent (ARCD) method.

\begin{center}
\begin{boxedminipage}{0.8\textwidth}
\vspace{1ex}
\centerline{\textbf{Algorithm:} ARCD$(x^0)$}
\vspace{1ex}
Set $v^0=x^0$, choose $\gamma_0 >0$ arbitrarily,
and repeat for $k=0,1,2,\ldots$
\vspace{-1ex}
\begin{enumerate} \itemsep 0pt
\item Compute $\alpha_k\in(0, n]$ from the equation
\[
\textstyle
\alpha_k^2 = \left(1-\frac{\alpha_k}{n}\right) \gamma_k+\frac{\alpha_k}{n}\mu
\]
and set
\[
\textstyle
\gamma_{k+1}=\left(1-\frac{\alpha_k}{n}\right) \gamma_k+\frac{\alpha_k}{n}\mu.
\]
\item Compute $y^k$ as
\[
y^k ~=~ \textstyle \frac{1}{\frac{\alpha_k}{n}\gamma_k+\gamma_{k+1}} \left(
\frac{\alpha_k}{n}\gamma_k v^k + \gamma_{k+1} x^k \right).
\]
\item
Choose $i_k\in\{1,\ldots,n\}$ uniformly at random, and update
\[
x^{k+1} = y^{k} -\textstyle  \frac{1}{L_{i_k}} U_{i_k}\nabla_{i_k}f(y^{k}).
\vspace{-2ex}
\]
\item Set
\[
v^{k+1} = \textstyle \frac{1}{\gamma_{k+1}} \left(
\left(1-\frac{\alpha_k}{n}\right)\gamma_k v^k + \frac{\alpha_k}{n}\mu y^k
-\frac{\alpha_k}{L_{i_k}} U_{i_k} \nabla_{i_k} f(y^k) \right) .
\]
\end{enumerate}
\end{boxedminipage}
\end{center}

\gap

\begin{remark}
For the above algorithm, claim that $\gamma_k >0$ and $\alpha_k$ is well-defined for all $k$.
Indeed, let $\gamma >0$ be arbitrarily given and define
\[
h(\alpha) := \alpha^2 - \left(1-\frac{\alpha}{n}\right) \gamma - \frac{\alpha}{n} \mu, \qquad \forall \alpha
\ge 0.
\]
We observe that
\[
h(0)=-\gamma \ < \ 0, \quad\quad h(n) = n^2-\mu \ \ge \ 0,
\]
where the last inequality is due to $\mu \le 1$. Therefore, by continuity of $h$,
there exists some $\alpha^* \in (0,n]$ such that $h(\alpha^*)=0$. Moreover, if $\mu=0$,
we have $0<\alpha^*<n$. Using these observations and the definitions of $\alpha_k$ and $\gamma_k$,
it is not hard to see by induction that $\gamma_k>0$ and $\alpha_k$ is well-defined for all $k$.
\end{remark}

The above description of the ARCD method comes directly from the derivation
using \emph{randomized estimate sequence} we develop in Section~\ref{sec:res},
and is very convenient for the purpose of our convergence analysis.
For implementation in practice, one can simplify the notations and use an
equivalent algorithm described below.
In the simplified description, it is also clear that the ARCD method is
equivalent to the method~(5.1) in \cite[Section~5]{Nesterov12rcdm},
with the following correspondences between the symbols used.
\begin{center}
\begin{tabular}{l|ccccc}
\hline
This paper
& $\alpha_k$ & $\alpha_{k-1}$ & $\theta_k$ & $\beta_k$ & $\mu$ \\
\hline
\cite[(5.1)]{Nesterov12rcdm}
& $1/\gamma_k$ & $b_k/a_k$ & $\alpha_k$ & $\beta_k$ & $\sigma$ \\
\hline
\end{tabular}
\end{center}

\begin{center}
\begin{boxedminipage}{0.8\textwidth}
\vspace{1ex}
\centerline{\textbf{Algorithm:} ARCD$(x^0)$}
\vspace{1ex}
Set $v^0=x^0$, choose $\alpha_{-1}\in(0,n]$,
and repeat for $k=0,1,2,\ldots$
\vspace{-1ex}
\begin{enumerate} \itemsep 0pt
\item Compute $\alpha_k\in(0, n]$ from the equation
\[
\textstyle
\alpha_k^2 = \left(1-\frac{\alpha_k}{n}\right) \alpha^2_{k-1}
+\frac{\alpha_k}{n}\mu ,
\]
and set
\[
\textstyle
\theta_k = \frac{n\alpha_k-\mu}{n^2-\mu}, \qquad
\beta_k = 1-\frac{\mu}{n\alpha_k}  .
\]
\item Compute $y^k$ as
\[
y^k ~=~ \theta_k v^k + (1-\theta_k) x^k .
\]
\item
Choose $i_k\in\{1,\ldots,n\}$ uniformly at random, and update
\[
x^{k+1} = y^{k} -\textstyle \frac{1}{L_{i_k}} U_{i_k} \nabla_{i_k}f(y^{k}).
\vspace{-2ex}
\]
\item Set
\[
\textstyle
v^{k+1} = \beta_k v^k + (1-\beta_k) y^k
-\frac{1}{\alpha_kL_{i_k}} U_{i_k} \nabla_{i_k} f(y^k) .
\]
\end{enumerate}
\end{boxedminipage}
\end{center}

\gap 

At each iteration~$k$, the ARCD method generates $y^k$, $x^{k+1}$ and
$v^{k+1}$. One can observe that $x^{k+1}$ and $v^{k+1}$ depend on the realization of the random variable
\[
\xi_k = \{i_0, i_1,\ldots, i_k\}
\]
while $y^k$ depends on the realization of $\xi_{k-1}$.

We now state  a sharper expected-value type of convergence rate for the ARCD method
than the one  given in \cite{Nesterov12rcdm}. Its proof relies on a new technique called
{\it randomized estimate sequence} that will be developed in Subsection \ref{sec:res}. Therefore,
we postpone the proof to Subsection \ref{proof-thm}.

\begin{theorem}\label{thm:arcd-rate}
Let $f^\star$ be the optimal value of problem~(\ref{eqn:unconstr-min}), $R_0$
be defined in \eqref{R0}, and $\{x^k\}$ be the sequence generated by the ARCD method.
Then, for any $k\geq 0$, there holds:
\[
\bE_{\xi_{k-1}} [f(x^k)] - f^\star ~\leq~ \lambda_k
\left(f(x^0)-f^\star+\frac{\gamma_0R^2_0}{2}\right),
\]
where $\lambda_0=1$ and
$\lambda_k=\prod_{i=0}^{k-1} \left(1-\frac{\alpha_i}{n}\right)$.
In particular, if $\gamma_0\geq\mu$, then
\[
\lambda_k ~\leq~ \min \left\{ \left(1-\frac{\sqrt{\mu}}{n}\right)^k,
~\left(\frac{n}{n+k\frac{\sqrt{\gamma_0}}{2}}\right)^2 \right\}.
\]
\end{theorem}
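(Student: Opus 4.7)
The plan is to adapt Nesterov's estimate sequence technique to the randomized coordinate setting. I would introduce a \emph{randomized estimate sequence}, a pair consisting of a random function sequence $\{\phi_k\}$ and a deterministic scalar sequence $\{\lambda_k\}$ with $\lambda_0=1$, satisfying (I) $\bE_{\xi_{k-1}}[\phi_k(x)] \le (1-\lambda_k) f(x) + \lambda_k \phi_0(x)$ for every $x \in \Re^N$, and (II) $\bE_{\xi_{k-1}}[f(x^k)] \le \bE_{\xi_{k-1}}[\phi_k^\star]$, where $\phi_k^\star = \min_x \phi_k(x)$. Once both are in place, setting $x=x^\star$ in (I) and combining with (II) yields $\bE_{\xi_{k-1}}[f(x^k)] - f^\star \le \lambda_k(\phi_0(x^\star) - f^\star)$; choosing $x^\star$ to minimize $\|x^0-x^\star\|_L$ and using $\phi_0(x) = f(x^0) + (\gamma_0/2)\|x-x^0\|_L^2$ then delivers the first claim.

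For the construction I would take $\phi_0$ as above and define recursively
\[
\phi_{k+1}(x) = \left(1 - \frac{\alpha_k}{n}\right)\phi_k(x) + \alpha_k \Bigl[ f(y^k) + \langle U_{i_k}\nabla_{i_k} f(y^k),\, x - y^k \rangle + \frac{\mu}{2n}\|x-y^k\|_L^2 \Bigr].
\]
The weight $\alpha_k$ (rather than $\alpha_k/n$) on the block-gradient term is chosen precisely so that $\bE_{i_k}$ replaces it by $(\alpha_k/n)\langle \nabla f(y^k),\,x-y^k\rangle$, reducing (I) to the classical deterministic estimate-sequence induction that uses strong convexity of $f$. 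A routine unfolding of the recursion then reveals the canonical form $\phi_k(x) = \phi_k^\star + (\gamma_k/2)\|x-v^k\|_L^2$; solving $\nabla \phi_{k+1}(x)=0$ reproduces the $v^{k+1}$ update of the algorithm, and the $\gamma_{k+1}$ recursion follows from matching quadratic coefficients.

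The main obstacle is establishing (II). I would apply~\eqref{eqn:lip-ineq} with $h_{i_k} = -L_{i_k}^{-1}\nabla_{i_k} f(y^k)$ to get $f(x^{k+1}) \le f(y^k) - (1/(2L_{i_k}))\|\nabla_{i_k} f(y^k)\|^2$, average over $i_k$ to obtain $\bE_{i_k}[f(x^{k+1})] \le f(y^k) - (1/(2n))(\|\nabla f(y^k)\|_L^\star)^2$, and then expand $\bE_{i_k}[\phi_{k+1}^\star]$ via the canonical form and the update for $v^{k+1}$. The critical manipulation is to verify that the algorithm's specific formula for $y^k$ (as a convex combination of $v^k$ and $x^k$ with weights determined by $\gamma_k,\gamma_{k+1}$) and the defining equation $\alpha_k^2 = \gamma_{k+1}$ make the cross-terms vanish and the $(\|\nabla f(y^k)\|_L^\star)^2$ contributions align — this is the block-coordinate analogue of the matching identity behind classical accelerated gradient methods, and is the most delicate piece of the argument.

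To bound $\lambda_k$ under $\gamma_0 \ge \mu$, I would first show $\gamma_k \ge \mu$ by induction (since $\gamma_{k+1}$ is a convex combination of $\gamma_k$ and $\mu$), which together with $\alpha_k^2 = \gamma_{k+1}$ gives $\alpha_k \ge \sqrt{\mu}$ and hence $\lambda_k \le (1-\sqrt{\mu}/n)^k$. For the $O(1/k^2)$ bound, the observation $\gamma_{k+1}/\lambda_{k+1} \ge \gamma_k/\lambda_k$ yields $\gamma_k \ge \gamma_0\lambda_k$; combining $\lambda_k - \lambda_{k+1} = \lambda_k \alpha_k/n$ with $\alpha_k = \sqrt{\gamma_{k+1}} \ge \sqrt{\gamma_0 \lambda_{k+1}}$ gives the standard telescoping inequality
\[
\frac{1}{\sqrt{\lambda_{k+1}}} - \frac{1}{\sqrt{\lambda_k}} \;\ge\; \frac{\sqrt{\gamma_0}}{2n},
\]
from which $\lambda_k \le (n/(n+k\sqrt{\gamma_0}/2))^2$ follows by summation.
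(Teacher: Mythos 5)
Your proposal follows essentially the same route as the paper: the same notion of randomized estimate sequence with conditions (I) and (II), the same quadratic canonical form $\phi_k(x)=\phi_k^\star+\frac{\gamma_k}{2}\|x-v^k\|_L^2$, the same mechanism for establishing (II) (block descent step, the choice of $y^k$ killing the linear terms, and $\alpha_k^2=\gamma_{k+1}$ aligning the squared-gradient contributions), and an identical argument for the decay of $\lambda_k$.

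There is, however, one concrete error in your displayed recursion for $\phi_{k+1}$: you put the weight $\alpha_k$ on $f(y^k)$, whereas it must be $\alpha_k/n$. The term $f(y^k)$ is deterministic conditioned on $\xi_{k-1}$, so taking $\bE_{i_k}$ leaves its coefficient unchanged; with your weights one gets $\bE_{i_k}[\phi_{k+1}(x)]=(1-\frac{\alpha_k}{n})\phi_k(x)+\alpha_k f(y^k)+\frac{\alpha_k}{n}\langle\nabla f(y^k),x-y^k\rangle+\frac{\alpha_k\mu}{2n}\|x-y^k\|_L^2$, and the strong-convexity bound $f(x)\ge f(y^k)+\langle\nabla f(y^k),x-y^k\rangle+\frac{\mu}{2}\|x-y^k\|_L^2$ can only absorb $\frac{\alpha_k}{n}f(y^k)$ of this, leaving an uncancelled $\frac{(n-1)\alpha_k}{n}f(y^k)$ that destroys property (I): the coefficients no longer telescope to $(1-\lambda_{k+1})f(x)+\lambda_{k+1}\phi_0(x)$. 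The correct recursion, as in the paper, is $\phi_{k+1}(x)=(1-\frac{\alpha_k}{n})\phi_k(x)+\alpha_k\bigl(\frac{1}{n}f(y^k)+\langle\nabla_{i_k}f(y^k),x_{i_k}-y^k_{i_k}\rangle+\frac{\mu}{2n}\|x-y^k\|_L^2\bigr)$; only the block-gradient term carries the inflated weight $\alpha_k$, precisely because it is the only term that is random in $i_k$. With that correction your outline matches the paper's proof. One further point worth making explicit in your verification of (II): to produce the cross-terms that the $y^k$ convex-combination identity cancels, you need to invoke the induction hypothesis $\bE_{\xi_{k-1}}[\phi_k^\star]\ge\bE_{\xi_{k-1}}[f(x^k)]$ together with convexity $f(x^k)\ge f(y^k)+\langle\nabla f(y^k),x^k-y^k\rangle$, which is how $x^k$ enters the computation of $\bE_{i_k}[\phi_{k+1}^\star]$.
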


%Since $\mu$ is the convexity parameter with respect to the $\|\cdot\|_L$,
%we always have $\mu\leq 1$, and can consider $1/\mu$ as the effective
%condition number of the problem.
%Therefore, setting $\gamma_0=1$ in the ARCD method guarantees $\gamma_0\geq\mu$.
%The upper bound $\gamma_0\leq n^2$ ensures $\alpha_k\leq n$.

\begin{remark}
We note that for $n=1$, the ARCD method reduces to a deterministic
accelerated full gradient method described in \cite[(2.2.8)]{Nesterov04book};
Our iteration complexity result above also becomes the same as the one given there.
\end{remark}

Nesterov \cite[Theorem~6]{Nesterov12rcdm} established the following convergence rate
for the above ARCD method:
\[
\bE_{\xi_{k-1}} [f(x^k)] - f^\star \ \leq \ \left\{\ba{lcl}
 \overbrace{\mu \left[2R^2_0+\frac{1}{n^2}(f(x^0)-f^\star)\right] \cdot
\left[\left(1+\frac{\sqrt{\mu}}{2n}\right)^{k+1}-\left(1-\frac{\sqrt{\mu}}{2n}\right)^{k+1}\right]^{-2}}^{a_\mu}
& \ \mbox{if} \ \mu >0, \\ [25pt]
\underbrace{\left(\frac{n}{k+1}\right)^2 \cdot \left[2R^2_0+\frac{1}{n^2}(f(x^0)-f^\star)\right]}_{a_0} & \ \mbox{otherwise}.
\ea \right.
\]
In view of Theorem~\ref{thm:arcd-rate}, our convergence rate is given by
\[
\bE_{\xi_{k-1}} [f(x^k)] - f^\star ~\leq~ \underbrace{\min \left\{\left(1-\frac{\sqrt{\mu}}{n}\right)^k,
~\left(\frac{n}{n+k\frac{\sqrt{\gamma_0}}{2}}\right)^2 \right\}
\left(f(x^0)-f^\star+\frac{\gamma_0R^2_0}{2}\right)}_{b_\mu}
\]
We now compare the above two rates by considering two cases: $\mu>0$ and $\mu=0$.

\bi
\item 
Case (1): $\mu>0$. We can observe that for sufficiently large $k$,
\[
a_\mu \ = \ O\left(\left(1+\frac{\sqrt{\mu}}{2n}\right)^{-2k}\right), \quad\quad
b_\mu \ = \ O\left(\left(1-\frac{\sqrt{\mu}}{n}\right)^k\right).
\]
It is easy to verify that
\[
\left(1+\frac{\sqrt{\mu}}{2n}\right)^{-2} \ > \ 1-\frac{\sqrt{\mu}}{n}
\]
and hence $a_\mu \gg  b_\mu$ when $k$ is sufficiently large, which implies that our rate is much tighter.
\item 
Case (2): $\mu=0$. For sufficiently $k$, we have
\[
\ba{lcl}
a_0 & \approx & (2n^2R^2_0+f(x^0)-f^\star)/k^2, \\ [8pt]
b_0 &\approx & \left(2n^2R^2_0 + \frac{4n^2}{\gamma_0}(f(x^0)-f^\star)\right)/k^2.
\ea
\]
Therefore, when $\gamma_0 > 4n^2$, we obtain $b_0  < a_0 $ for sufficiently large $k$,
which again implies that our rate is sharper.
\ei

\subsection{Randomized estimate sequence}
\label{sec:res}

In \cite{Nesterov04book}, Nesterov introduced a powerful
framework of \emph{estimate sequence} for the development and analysis
of accelerated full gradient methods. Here we extend it to a randomized 
block-coordinate descent setup, 
and use it to analyze the convergence rate of the ARCD method subsequently.

\begin{definition}\label{def:res}
Let $\phi_0(x)$ be a deterministic function and $\phi_k(x)$ be a random function
depending on $\xi_{k-1}$ for all $k \ge 1$, and $\lambda_k\geq0$ for all $k\geq 0$.
The sequence $\{(\phi_k(x),\lambda_k)\}_{k=0}^\infty$ is called a randomized
estimate sequence of function~$f(x)$ if
\beq \label{lim-lambda}
\lambda_k \to 0
\eeq
and for any $x\in\Re^N$ and all $k\geq 0$ we have
\beq \label{estimate-seq}
\bE_{\xi_{k-1}} [\phi_k(x)] ~\leq~ (1-\lambda_k) f(x) + \lambda_k \phi_0(x),
\eeq
where $\bE_{\xi_{-1}} [\phi_0(x)] \eqdef \phi_0(x)$.
\end{definition}

Here we assume $\{\lambda_k\}_{k\geq 0}$ is a deterministic sequence
that is independent of $\xi_k$.

\begin{lemma}\label{lem:res}
Let $x^\star$ be an optimal solution to~(\ref{eqn:unconstr-min})
and $f^\star$ be the optimal value. Suppose that $\{(\phi_k(x),\lambda_k)\}_{k=0}^\infty$ is
a randomized estimate sequence of function~$f(x)$. Assume that $\{x^k\}$ is a sequence such that
for each $k\geq0$,
\beq \label{rand-upbdd}
\bE_{\xi_{k-1}} [f(x^k)] ~\leq~ \min_x \bE_{\xi_{k-1}} [\phi_k(x)],
\eeq
where $\bE_{\xi_{-1}} [f(x^0)] \eqdef f(x^0)$. Then we have
\[
\bE_{\xi_{k-1}} [f(x^k)] - f^\star ~\leq~
\lambda_k \left( \phi_0(x^\star) - f^\star\right) ~\to~ 0.
\]
\end{lemma}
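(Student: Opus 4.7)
The plan is to chain three inequalities: the hypothesis on $\{x^k\}$, a trivial minimization bound obtained by substituting the specific point $x^\star$, and the defining property of the randomized estimate sequence. The argument mirrors Nesterov's classical estimate sequence lemma, with the only adjustment being that everything involving $\phi_k$ is taken in expectation with respect to $\xi_{k-1}$.

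First, I would invoke the assumption on $\{x^k\}$ to write
\[
\bE_{\xi_{k-1}}[f(x^k)] ~\leq~ \min_x \bE_{\xi_{k-1}}[\phi_k(x)].
\]
Since the right-hand side is a minimum over all $x \in \Re^N$, replacing the argument by the fixed optimal point $x^\star$ can only increase it, which gives
\[
\bE_{\xi_{k-1}}[f(x^k)] ~\leq~ \bE_{\xi_{k-1}}[\phi_k(x^\star)].
\]

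Next, I would apply the defining property \eqref{estimate-seq} of the randomized estimate sequence at $x=x^\star$ to obtain
\[
\bE_{\xi_{k-1}}[\phi_k(x^\star)] ~\leq~ (1-\lambda_k)\, f(x^\star) + \lambda_k\, \phi_0(x^\star) ~=~ (1-\lambda_k)\, f^\star + \lambda_k\, \phi_0(x^\star).
\]
Chaining the last two displays and subtracting $f^\star$ from both sides yields
\[
\bE_{\xi_{k-1}}[f(x^k)] - f^\star ~\leq~ \lambda_k\bigl(\phi_0(x^\star) - f^\star\bigr),
\]
which is the claimed bound. The convergence to zero is then immediate from the condition $\lambda_k \to 0$ built into Definition~\ref{def:res}, since $\phi_0(x^\star) - f^\star$ is a deterministic constant independent of $k$.

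There is no real obstacle here; the proof is a three-line chain. The only conceptual subtlety is that although the estimate-sequence inequality \eqref{estimate-seq} holds for every $x$, we must be careful that $x^\star$ is a deterministic point (not random), so substituting it preserves the inequality in expectation without any measurability issue. Once this is noted, all three steps are routine.
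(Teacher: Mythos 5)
Your proof is correct and follows essentially the same three-inequality chain as the paper; the only cosmetic difference is that you substitute $x^\star$ before applying the estimate-sequence bound \eqref{estimate-seq}, whereas the paper applies \eqref{estimate-seq} pointwise, takes the minimum of the resulting upper bound, and then evaluates at $x^\star$ --- the two orderings yield the identical conclusion.
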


\begin{proof}
Since $\{(\phi_k(x),\lambda_k)\}_{k=0}^\infty$ is a randomized estimate sequence of $f(x)$, it follows from
\eqref{estimate-seq} and \eqref{rand-upbdd} that
\begin{eqnarray*}
\bE_{\xi_{k-1}} [f(x^k)] &\leq& \min_x \bE_{\xi_{k-1}} [\phi_k(x)] \\
&\leq& \min_x \left\{ (1-\lambda_k) f(x) + \lambda_k \phi_0(x) \right\} \\
&\leq& (1-\lambda_k) f(x^\star) + \lambda_k \phi_0(x^\star) \\
&=&  f^\star + \lambda_k ( \phi_0(x^\star) - f^\star ),
\end{eqnarray*}
which together with \eqref{lim-lambda} implies that the conclusion holds.
\end{proof}

\gap

As we will see next, our construction of the randomized estimate sequence
satisfies a stronger condition, i.e.,
\[
\bE_{\xi_{k-1}} f(x^k) ~\leq~ \bE_{\xi_{k-1}} [\min_x\phi_k(x)].
\]
This implies that the assumption in Lemma~\ref{lem:res}, namely, \eqref{rand-upbdd} holds due to
\[
\bE_{\xi_{k-1}} [\min_x\phi_k(x)] ~\leq~ \min_x \bE_{\xi_{k-1}} [\phi_k(x)].
\]

\begin{lemma}\label{lem:construct-res}
Assume that $f$ satisfies Assumption~\ref{asmp:coord-smooth} with convexity parameter $\mu \ge 0$.
In addition, suppose that
\begin{itemize}
\item $\phi_0(x)$ is an arbitrary deterministic function on $\Re^N$;
\item $\{y^k\}_{k=1}^\infty$ is a sequence in $\Re^N$ such that
$y^k$ depends on $\xi_{k-1}$;
\item $\{\alpha_k\}_{k=1}^\infty$ is independent of $\xi_k$ and satisfies
$\alpha_k\in(0,n)$ for all $k\geq0$ and $\sum_{k=0}^\infty\alpha_k=\infty$.
\end{itemize}
Then the pair of sequences $\{\phi_k(x)\}_{k=0}^\infty$ and
$\{\lambda_k\}_{k=0}^\infty$ constructed by setting $\lambda_0=1$ and
\begin{eqnarray}
\lambda_{k+1} &=& \left(1-\frac{\alpha_k}{n}\right) \lambda_k, \label{lambda-def}\\
\phi_{k+1}(x) &=& \left(1-\frac{\alpha_k}{n}\right) \phi_k(x) + \alpha_k\left(
\frac{1}{n}f(y^k) + \langle \nabla_{i_k}f(y^k), x_{i_k}-y^k_{i_k} \rangle
+\frac{\mu}{2 n}\|x-y^k\|_L^2 \right) \label{phi-def},
\end{eqnarray}
is a randomized estimate sequence of~$f(x)$.
\end{lemma}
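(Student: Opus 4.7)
The plan is to verify the two conditions of Definition \ref{def:res} separately: first the decay $\lambda_k \to 0$, then the key inequality \eqref{estimate-seq}, which I will prove by induction on $k$.

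For the decay, I observe that the recursion $\lambda_{k+1}=(1-\alpha_k/n)\lambda_k$ with $\lambda_0=1$ gives $\lambda_k=\prod_{i=0}^{k-1}(1-\alpha_i/n)$, which lies in $(0,1)$ since $\alpha_i\in(0,n)$. Taking logarithms and using $\log(1-t)\le -t$ for $t\in(0,1)$, I get $\log \lambda_k\le -\sum_{i=0}^{k-1}\alpha_i/n\to -\infty$ by the assumption $\sum_{k=0}^\infty \alpha_k=\infty$. So $\lambda_k\to 0$.

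For the estimate-sequence inequality, the base case $k=0$ is trivial because $\lambda_0=1$ gives the tautology $\phi_0(x)\le\phi_0(x)$. For the inductive step, I would first take the conditional expectation of \eqref{phi-def} with respect to $i_k$, holding $\xi_{k-1}$ fixed. Since $y^k$ and $\phi_k$ depend only on $\xi_{k-1}$, and $i_k$ is uniform on $\{1,\ldots,n\}$, the only term affected is the partial-gradient inner product, for which
\[
\bE_{i_k}\bigl[\langle \nabla_{i_k}f(y^k),\,x_{i_k}-y^k_{i_k}\rangle\bigr]
=\frac{1}{n}\langle \nabla f(y^k),\,x-y^k\rangle.
\]
Plugging this in yields
\[
\bE_{i_k}[\phi_{k+1}(x)\mid\xi_{k-1}]
=\Bigl(1-\tfrac{\alpha_k}{n}\Bigr)\phi_k(x)
+\frac{\alpha_k}{n}\Bigl(f(y^k)+\langle \nabla f(y^k),x-y^k\rangle+\tfrac{\mu}{2}\|x-y^k\|_L^2\Bigr).
\]
By the $\mu$-strong convexity of $f$ with respect to $\|\cdot\|_L$, the parenthesized expression is bounded above by $f(x)$, so the right-hand side is at most $(1-\alpha_k/n)\phi_k(x)+(\alpha_k/n)f(x)$.

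Next I would take total expectation $\bE_{\xi_{k-1}}$ and apply the inductive hypothesis $\bE_{\xi_{k-1}}[\phi_k(x)]\le(1-\lambda_k)f(x)+\lambda_k\phi_0(x)$, obtaining
\[
\bE_{\xi_k}[\phi_{k+1}(x)]
\le\Bigl(1-\tfrac{\alpha_k}{n}\Bigr)\bigl[(1-\lambda_k)f(x)+\lambda_k\phi_0(x)\bigr]+\tfrac{\alpha_k}{n}f(x).
\]
A direct algebraic simplification shows that the coefficient of $\phi_0(x)$ is $(1-\alpha_k/n)\lambda_k=\lambda_{k+1}$ and the coefficient of $f(x)$ is $1-\lambda_{k+1}$, which is exactly \eqref{estimate-seq} at index $k+1$. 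The main subtlety—though really just a bookkeeping point rather than a genuine obstacle—is being careful that $y^k$ is measurable with respect to $\xi_{k-1}$ (so that $\bE_{i_k}$ treats it as a constant) and that $\{\alpha_k\}$ is deterministic, so the identities between $\bE_{\xi_k}$ and $\bE_{\xi_{k-1}}\bE_{i_k}[\,\cdot\mid\xi_{k-1}]$ and the linearity in $\lambda_k$ go through cleanly.
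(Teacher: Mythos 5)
Your proposal is correct and follows essentially the same route as the paper's proof: the same logarithmic argument for $\lambda_k\to 0$, and the same induction in which $\bE_{i_k}$ converts the partial-gradient term into $\frac{1}{n}\langle\nabla f(y^k),x-y^k\rangle$, the lower-bound inequality for $f$ with convexity parameter $\mu$ absorbs the bracket into $f(x)$, and the recursion for $\lambda_k$ closes the induction. No gaps.
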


\begin{proof}
It follows from \eqref{lambda-def} and $\lambda_0=1$ that
$\lambda_k = \prod_{i=0}^{k-1} (1-\alpha_i/n)$ for $k \ge 1$. Then we have
\[
\log\lambda_k ~=~ \sum_{i=0}^{k-1} \log\left(1-\frac{\alpha_i}{n}\right) ~\le~ -\frac1n \sum_{i=0}^{k-1} \alpha_i
~\to~ -\infty
\]
due to $\sum_{i=0}^\infty\alpha_i=\infty$. Hence, $\lambda_k \to 0$.
We next prove by induction that \eqref{estimate-seq} holds for all $k\ge 0$. Indeed, for $k=0$, we
know that $\lambda_0=1$ and hence
\[
\bE_{\xi_{-1}}[\phi_0(x)] ~=~ \phi_0(x)  ~=~ (1-\lambda_0) f(x) + \lambda_0 \phi_0(x),
\]
that is, \eqref{estimate-seq} holds for $k=0$. Now suppose it holds for some $k\geq 0$. Using \eqref{phi-def}, we
obtain that
\begin{eqnarray*}
\bE_{\xi_k} [\phi_{k+1}(x)]
&=& \bE_{\xi_{k-1}}\left[\bE_{i_k} [\phi_{k+1}(x)] \right] \\
&=& \bE_{\xi_{k-1}}\bigg[ \left(1-\frac{\alpha_k}{n}\right) \phi_k(x)
   + \alpha_k \bigg( \frac{1}{n}f(y^k) + \bE_{i_k}\left[
   \langle \nabla_{i_k}f(y^k), x_{i_k}-y^k_{i_k} \rangle \right] \\
&&  \hspace{6cm} +\frac{\mu}{2 n}\|x-y^k\|_L^2 \bigg) \bigg] \\
&=& \bE_{\xi_{k-1}}\left[ \left(1-\frac{\alpha_k}{n}\right) \phi_k(x)
   + \frac{\alpha_k}{n} \left( f(y^k) + \langle \nabla f(y^k), x-y^k \rangle
   +\frac{\mu}{2}\|x-y^k\|_L^2 \right) \right] \\
&\leq& \bE_{\xi_{k-1}}\left[ \left(1-\frac{\alpha_k}{n}\right) \phi_k(x)
   + \frac{\alpha_k}{n} f(x) \right],
\end{eqnarray*}
where the last inequality is due to convexity of~$f$.
Using the induction hypothesis, we have
\begin{eqnarray*}
\bE_{\xi_k} [\phi_{k+1}(x)]
&\leq& \left(1-\frac{\alpha_k}{n}\right) \bigl(
  (1-\lambda_k)f(x)+\lambda_k\phi_0(x) \bigr) + \frac{\alpha_k}{n} f(x) \\
&=& \left(1-\left(1-\frac{\alpha_k}{n}\right)\lambda_k\right) f(x)
  + \left(1-\frac{\alpha_k}{n}\right) \lambda_k \phi_0(x) \\
&=& (1-\lambda_{k+1}) f(x)  + \lambda_{k+1} \phi_0(x)
\end{eqnarray*}
and hence \eqref{estimate-seq} also holds for $k+1$.  This completes the proof.
\end{proof}

\gap

\begin{lemma}\label{lem:quadratic-res}
Let $\phi_0(x) = \phi_0^\star + \frac{\gamma_0}{2} \|x-v^0\|_L^2$.
Then the randomized estimate sequence constructed in
Lemma~\ref{lem:construct-res} preserves the canonical form of the functions,
i.e., for all $k\geq 0$,
\begin{equation}\label{eqn:res-canonical}
\phi_k(x) = \phi_k^\star + \frac{\gamma_k}{2} \|x-v^k\|_L^2,
\end{equation}
where the sequences $\{\gamma_k\}$, $\{v^k\}$ and $\{\phi_k^\star\}$ are
defined as follows:
\begin{eqnarray}
\gamma_{k+1} &=& \left(1-\frac{\alpha_k}{n}\right) \gamma_k
    + \frac{\alpha_k}{n} \mu, \label{gammak}\\
v^{k+1} &=& \frac{1}{\gamma_{k+1}} \left( \left(1-\frac{\alpha_k}{n}\right)
    \gamma_k v^k + \frac{\alpha_k}{n} \mu y^k - \frac{\alpha_k}{L_{i_k}}
    U_{i_k} \nabla_{i_k} f(y^k) \right) \label{vk}\\
\phi_{k+1}^\star &=& \left(1-\frac{\alpha_k}{n}\right) \phi_k^\star
    +\frac{\alpha_k}{n} f(y^k) - \frac{\alpha_k^2}{2\gamma_{k+1}L_{i_k}}
    \|\nabla_{i_k}f(y^k)\|^2 \nonumber \\
&& +\frac{\alpha_k\left(1-\frac{\alpha_k}{n}\right)\gamma_k}{\gamma_{k+1}}
    \left( \frac{\mu}{2n}\|y^k-v^k\|_L^2 + \langle \nabla_{i_k}f(y^k),
    v^k_{i_k} - y^k_{i_k} \rangle \right) \label{phik-star}
\end{eqnarray}
\end{lemma}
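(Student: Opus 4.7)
The plan is to proceed by induction on $k$. The base case $k=0$ holds by the hypothesis on $\phi_0$. For the inductive step, I will substitute the canonical form $\phi_k(x) = \phi_k^\star + \frac{\gamma_k}{2}\|x-v^k\|_L^2$ into the recursion \eqref{phi-def} and show that the result is again a quadratic function of the canonical form, while reading off formulas for $\gamma_{k+1}$, $v^{k+1}$, and $\phi_{k+1}^\star$.

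First I would compute the Hessian (equivalently, the coefficient of the quadratic term in $x$) of the right-hand side of \eqref{phi-def}. The term $(1-\alpha_k/n)\phi_k(x)$ contributes a quadratic with weight $(1-\alpha_k/n)\gamma_k$ on each block $i$ (scaled by $L_i$), and the term $\frac{\alpha_k \mu}{2n}\|x-y^k\|_L^2$ contributes $(\alpha_k/n)\mu$; the linear-in-$x$ inner product makes no quadratic contribution. Summing yields the weight $\gamma_{k+1} = (1-\alpha_k/n)\gamma_k + (\alpha_k/n)\mu$, which establishes \eqref{gammak} and shows that $\phi_{k+1}$ is indeed of the canonical form $\phi_{k+1}^\star + \frac{\gamma_{k+1}}{2}\|x-v^{k+1}\|_L^2$ for some center $v^{k+1}$ and minimum value $\phi_{k+1}^\star$.

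Next I would identify $v^{k+1}$ as the unique minimizer of $\phi_{k+1}$ by setting $\nabla \phi_{k+1}(x) = 0$. Using $\nabla\bigl(\tfrac{1}{2}\|x-v\|_L^2\bigr) = \mathrm{diag}(L_i I_{N_i})(x-v)$ and noting that $\nabla_x\langle \nabla_{i_k} f(y^k), x_{i_k} - y^k_{i_k}\rangle = U_{i_k}\nabla_{i_k} f(y^k)$, the first-order condition reads
\[
\gamma_{k+1}\,\mathrm{diag}(L_i I_{N_i})\,v^{k+1} = (1-\alpha_k/n)\gamma_k\,\mathrm{diag}(L_i I_{N_i})v^k + (\alpha_k\mu/n)\mathrm{diag}(L_i I_{N_i})y^k - \alpha_k U_{i_k}\nabla_{i_k}f(y^k).
\]
Multiplying by $\mathrm{diag}(L_i^{-1} I_{N_i})$ and using $U_{i_k}\nabla_{i_k}f(y^k)/L_{i_k}$ on the last term yields \eqref{vk}.

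Finally, to extract $\phi_{k+1}^\star$ I would evaluate both sides of \eqref{phi-def} at $x = y^k$. On the one hand, $\phi_{k+1}(y^k) = \phi_{k+1}^\star + \frac{\gamma_{k+1}}{2}\|y^k - v^{k+1}\|_L^2$; on the other hand, the inner product and the $\|x-y^k\|_L^2$ terms vanish at $x=y^k$, so $\phi_{k+1}(y^k) = (1-\alpha_k/n)\bigl(\phi_k^\star + \frac{\gamma_k}{2}\|y^k-v^k\|_L^2\bigr) + (\alpha_k/n) f(y^k)$. Solving for $\phi_{k+1}^\star$ and substituting the expression $v^{k+1}-y^k = \gamma_{k+1}^{-1}\bigl((1-\alpha_k/n)\gamma_k(v^k-y^k) - (\alpha_k/L_{i_k})U_{i_k}\nabla_{i_k}f(y^k)\bigr)$ gives, upon expanding $\|v^{k+1}-y^k\|_L^2$ using $\|U_{i_k}g_{i_k}\|_L^2 = L_{i_k}\|g_{i_k}\|^2$ and $\langle U_{i_k}\nabla_{i_k}f(y^k), v^k-y^k\rangle_L = L_{i_k}\langle \nabla_{i_k}f(y^k), v^k_{i_k}-y^k_{i_k}\rangle$, a three-term expansion. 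The main obstacle is the bookkeeping in this expansion: one has to verify that the coefficient of $\|y^k - v^k\|_L^2$ simplifies via the identity $(1-\alpha_k/n)\gamma_k/2 - (1-\alpha_k/n)^2\gamma_k^2/(2\gamma_{k+1}) = \alpha_k\mu(1-\alpha_k/n)\gamma_k/(2n\gamma_{k+1})$, which uses precisely the definition of $\gamma_{k+1}$ in \eqref{gammak}. With this simplification, the resulting expression for $\phi_{k+1}^\star$ matches \eqref{phik-star}, completing the induction.
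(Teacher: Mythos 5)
Your proposal is correct and follows essentially the same route as the paper's proof: induction on $k$, reading off $\gamma_{k+1}$ from the Hessian of the recursion \eqref{phi-def}, identifying $v^{k+1}$ from the first-order optimality condition, and recovering $\phi_{k+1}^\star$ by evaluating at $x=y^k$ and expanding $\|y^k-v^{k+1}\|_L^2$ with the same coefficient identity derived from \eqref{gammak}. No substantive differences.
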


\begin{proof}
%We follow the same steps as in the proof of \cite[Lemma~2.2.3]{Nesterov04book}.
First we observe that $\phi_k(x)$ is a convex quadratic function due to \eqref{phi-def}
and the definition of $\phi_0(x)$.  We now prove by induction that for $\phi_k$ is given by
\eqref{eqn:res-canonical} all $k\geq 0$. Clearly, \eqref{eqn:res-canonical} holds for $k=0$.
Suppose now that it holds for some $k \ge 0$. It follows that the Hessian of $\phi_k(x)$ is a
block-diagonal matrix given by
\[
\nabla^2 \phi_k(x)~=~\gamma_k\,\diag\left(L_1 I_{N_1},\ldots,L_n I_{N_n}\right).
\]
Using this relation, \eqref{phi-def} and \eqref{gammak}, we have
\begin{eqnarray}
\nabla^2 \phi_{k+1}(x)
&=& \left(1-\frac{\alpha_k}{n}\right) \nabla^2\phi_k(x) +
    \frac{\alpha_k}{n}\mu\, \diag\left(L_1 I_{N_1},\ldots,L_n I_{N_n}\right) \nonumber  \\
&=& \gamma_{k+1}\,\diag\left(L_1 I_{N_1},\ldots,L_n I_{N_n}\right) \label{Hessian-phi}.
\end{eqnarray}
%Hence it is also true for $k+1$. This together with the fact that $\phi_k$ is a convex
%quadratic function implies that $\phi_k(x)$ must be in the canonical form~(\ref{eqn:res-canonical})
%with $\gamma_{k}$ specified above.
Using the induction hypothesis by substituting \eqref{eqn:res-canonical} into \eqref{phi-def},
we can write $\phi_{k+1}(x)$ as
\begin{eqnarray}
\phi_{k+1}(x)
&=& \left(1-\frac{\alpha_k}{n}\right) \left( \phi_k^\star
    + \frac{\gamma_k}{2} \|x-v^k\|_L^2 \right) \nonumber \\
&& + \alpha_k\left( \frac{1}{n}f(y^k) + \langle \nabla_{i_k}f(y^k),
    x_{i_k}-y^k_{i_k} \rangle +\frac{\mu}{2 n}\|x-y^k\|_L^2 \right), \label{phik-form}
\end{eqnarray}
which together with \eqref{vk} implies
\beq \label{deriv}
\nabla \phi_{k+1}(v^{k+1}) = \left(1-\frac{\alpha_k}{n}\right) \gamma_k \sum_{i=1}^n U_i L_i(v^{k+1}_i-v^k_i)
+ \alpha_k U_{i_k} \nabla_{i_k} f(y^k) + \frac{\alpha_k}{n}\mu
\sum_{i=1}^n U_i L_i(v^{k+1}_i-y^k_i) = 0.
\eeq
Letting $x=y^k$ in \eqref{phik-form}, one has
\[
\phi_{k+1}(y^k) = \left(1-\frac{\alpha_k}{n}\right) \left( \phi_k^\star
+ \frac{\gamma_k}{2}\|y^k-v^k\|_L^2 \right) + \frac{\alpha_k}{n} f(y^k).
\]
In view of \eqref{vk}, we have
\[
v^{k+1} - y^k
~=~ \frac{1}{\gamma_{k+1}} \left( \left(1-\frac{\alpha_k}{n}\right) \gamma_k
(v^k-y^k) - \frac{\alpha_k}{L_{i_k}} U_{i_k}\nabla_{i_k} f(y^k) \right),
\]
and hence
\begin{eqnarray*}
\frac{\gamma_{k+1}}{2}\|y^k-v^{k+1}\|_L^2
&=& \frac{1}{2\gamma_{k+1}}\biggl(\left(1-\frac{\alpha_k}{n}\right)^2\gamma_k^2
    \|y^k-v^k\|_L^2 + \frac{\alpha_k^2}{L_{i_k}}\|\nabla_{i_k} f(y^k)\|^2 \\
&& \qquad\qquad- 2 \alpha_k \left(1-\frac{\alpha_k}{n}\right) \gamma_k
    \left\langle \nabla_{i_k}f(y^k), v^k_{i_k} - y^k_{i_k} \right\rangle \bigg).
\end{eqnarray*}
In addition, using \eqref{gammak} we obtain that
\[
\left(1-\frac{\alpha_k}{n}\right) \frac{\gamma_k}{2}
-\frac{1}{2\gamma_{k+1}} \left(1-\frac{\alpha_k}{n}\right)^2\gamma_k^2
~=~ \frac{1}{2\gamma_{k+1}} \left(1-\frac{\alpha_k}{n}\right) \gamma_k
\frac{\alpha_k}{n} \mu.
\]
By virtue of the above relations and \eqref{phik-star},  it is not hard to
conclude that
\[
\phi_{k+1}(y^k) =  \phi_{k+1}^\star + \frac{\gamma_{k+1}}{2}\|y^k-v^{k+1}\|_L^2,
\]
which, together with \eqref{Hessian-phi}, \eqref{deriv} and the fact that $\phi_{k+1}$
is quadratic, implies that
\[
\phi_{k+1}(x) = \phi_{k+1}^\star + \frac{\gamma_{k+1}}{2}\|x-v^{k+1}\|_L^2.
\]
Therefore, the conclusion holds.
\end{proof}

\subsection{Proof of Theorem~\ref{thm:arcd-rate}}
\label{proof-thm}

Let $\phi_0(x)=f(v^0)+\gamma_0 \|x-v^0\|^2_L/2$, $\{y^k\}$ and $\{\alpha_k\}$ be generated
in the ARCD method. In addition, let $\{(\phi_k(x),\lambda_k\}$ be the randomized estimate
sequence of $f(x)$ generated as in Lemma~\ref{lem:construct-res} by using such
$\{y^k\}$ and $\{\alpha_k\}$.

First we prove by induction that for all $k\geq 0$,
\begin{equation}\label{eqn:res-hypothesis}
\bE_{\xi_{k-1}} [f(x^k)]
~\leq~ \bE_{\xi_{k-1}} \left[\left\{ \phi_k^\star = \min_x \phi_k(x) \right\}\right].
\end{equation}
For $k=0$, using $v^0=x^0$, the definition of $\phi_0(x)$ and
$\bE_{\xi_{-1}} [f(x^0)]=f(x^0)$, we have
\[
\bE_{\xi_{-1}} [f(x^0)] ~=~ f(x^0) ~=~ f(v^0) ~=~ \phi_0^\star,
\]
and hence \eqref{eqn:res-hypothesis} holds for $k=0$. Now suppose it holds for some
$k\geq 0$. It follows from \eqref{phik-star} that
\begin{eqnarray}
\bE_{\xi_k}[\phi_{k+1}^\star]
&=& \bE_{\xi_{k-1}}\left[ \bE_{i_k} [\phi_{k+1}^\star] \right] \nonumber\\
&=& \bE_{\xi_{k-1}}\Biggl[ \left(1-\frac{\alpha_k}{n}\right) \phi_k^\star
    +\frac{\alpha_k}{n} f(y^k) - \frac{\alpha_k^2}{2\gamma_{k+1}}
    \bE_{i_k}\left[ \frac{1}{L_{i_k}}\|\nabla_{i_k}f(y^k)\|^2 \right]
\label{eqn:expect-min-recursion}\\
&&\qquad\qquad
    +\frac{\alpha_k\left(1-\frac{\alpha_k}{n}\right)\gamma_k}{\gamma_{k+1}}
    \left( \frac{\mu}{2n}\|y^k-v^k\|_L^2
    + \bE_{i_k}\left[ \langle \nabla_{i_k}f(y^k),
    v^k_{i_k} - y^k_{i_k} \rangle \right] \right) \Biggr] . \nonumber
\end{eqnarray}
Let\
\[
d_i(y^k) ~=~ -\frac{1}{L_i} \nabla_i f(y^k), \qquad i=1,\ldots,n,
\]
and $d(y^k) =\sum_{i=1}^n U_i d_i(y^k)$. Then we have
\[
\bE_{i_k}\left[ \frac{1}{L_{i_k}}\|\nabla_{i_k}f(y^k)\|^2 \right]
~=~ \frac{1}{n}\|d(y^k)\|_L^2.
\]
Moreover,
\[
\bE_{i_k}\left[ \langle \nabla_{i_k}f(y^k), v^k_{i_k}-y^k_{i_k}\rangle\right]
~=~ \frac{1}{n} \langle \nabla f(y^k), v^k-y^k \rangle .
\]
Using these two equalities and dropping the term $\|y^k-v^k\|_L^2$
in~(\ref{eqn:expect-min-recursion}), we arrive at
\begin{eqnarray*}
\bE_{\xi_k}[\phi_{k+1}^\star]
&\geq& \bE_{\xi_{k-1}}\Bigg[ \left(1-\frac{\alpha_k}{n}\right) \phi_k^\star
    +\frac{\alpha_k}{n} f(y^k)-\frac{\alpha_k^2}{2n\gamma_{k+1}}\|d(y^k)\|_L^2\\
&&  \quad\qquad
    +\frac{\alpha_k}{n}\left(1-\frac{\alpha_k}{n}\right)\frac{\gamma_k}
    {\gamma_{k+1}} \langle \nabla f(y^k), v^k-y^k \rangle \Bigg].
\end{eqnarray*}
By the induction hypothesis and the convexity of $f$, we obtain that
\[
\bE_{\xi_{k-1}} [\phi_k^\star] ~\geq~ \bE_{\xi_{k-1}} [f(x^k)]
~\geq~ \bE_{\xi_{k-1}}[f(y^k) +\langle \nabla f(y^k),x^k-y^k\rangle].
\]
Combining the above two inequalities gives
\begin{eqnarray*}
\bE_{\xi_k}[\phi_{k+1}^\star]
&\geq& \bE_{\xi_{k-1}}\Bigg[
    f(y^k)-\frac{\alpha_k^2}{2n\gamma_{k+1}}\|d(y^k)\|_L^2\\
&&  \quad\qquad
    +\left(1-\frac{\alpha_k}{n}\right) \left\langle \nabla f(y^k),
    \frac{\alpha_k\gamma_k}{n\gamma_{k+1}} (v^k-y^k) + (x^k-y^k)
    \right\rangle \Bigg].
\end{eqnarray*}
Recall that
\[
y^k ~=~ \frac{1}{\frac{\alpha_k}{n}\gamma_k+\gamma_{k+1}} \left(
\frac{\alpha_k}{n}\gamma_k v^k + \gamma_{k+1} x^k \right).
\]
This relation together with the above inequality yields
\[
\bE_{\xi_k}[\phi_{k+1}^\star]
~\geq~ \bE_{\xi_{k-1}}\left[
    f(y^k)-\frac{\alpha_k^2}{2n\gamma_{k+1}}\|d(y^k)\|_L^2 \right].
\]
Also, we observe that $\alpha_k^2=\gamma_{k+1}$. Substituting it into the
above inequality gives
\[
\bE_{\xi_k}[\phi_{k+1}^\star]
~\geq~ \bE_{\xi_{k-1}}\left[ f(y^k)-\frac{1}{2n}\|d(y^k)\|_L^2 \right].
\]
In addition, notice that
\[
x^{k+1} ~=~ y^k - \frac{1}{L_{i_k}} U_{i_k}\nabla_{i_k} f(y^k)
~=~ y^k + U_{i_k} d_{i_k}(y^k),
\]
which together with Corollary~\ref{cor:monotone} yields
\[
\bE_{\xi_k}[\phi_{k+1}^\star] ~\geq~ \bE_{\xi_{k-1}}\left[
\bE_{i_k} f(x^{k+1}) \right] ~=~ \bE_{\xi_k} [f(x^{k+1})].
\]
Therefore, (\ref{eqn:res-hypothesis}) holds for all $k+1$.
Further, by Lemma~\ref{lem:res}, we have
\[
\bE_{\xi_{k-1}} [f(x^k)] - f^\star ~\leq~ \lambda_k
\left(f(x^0)-f^\star+\frac{\gamma_0}{2}\|x^0-x^\star\|_L^2\right).
\]

Finally, we estimate the decay of $\lambda_k$, using the same arguments 
in the proof of \cite[Lemma~2.2.4]{Nesterov04book}.
Here we assume $\gamma_0\geq\mu$
(it suffices to set $\gamma_0=1$ because $\mu\leq 1$).
Indeed, if $\gamma_k\geq \mu$, then
\[
\gamma_{k+1}
~=~ \left(1-\frac{\alpha_k}{n}\right) \gamma_k + \frac{\alpha_k}{n} \mu
~\geq~ \mu.
\]
So we have $\gamma_k\geq\mu$ for all $k\geq 0$.
Since $\alpha_k^2=\gamma_{k+1}$,
we have $\alpha_k\geq\sqrt{\mu}$ for all $k\geq 0$.
Therefore,
\[
\lambda_k ~=~ \prod_{i=0}^{k-1} (1-\frac{\alpha_i}{n})
~\leq~ \left(1-\frac{\sqrt{\mu}}{n}\right)^k .
\]
In addition, we have $\gamma_k\geq\gamma_0\lambda_k$.
To see this, we note $\gamma_0=\gamma_0\lambda_0$ and use induction
\[
\gamma_{k+1} ~\geq~ \left(1-\frac{\alpha_k}{n}\right) \gamma_k
~\geq~\left(1-\frac{\alpha_k}{n}\right) \gamma_0\lambda_k
~=~ \gamma_0\lambda_{k+1}.
\]
This implies
\begin{equation}\label{eqn:alpha-lb}
\alpha_k ~=~\sqrt{\gamma_{k+1}} ~\geq~ \sqrt{\gamma_0\lambda_{k+1}}.
\end{equation}
Since $\{\lambda_k\}$ is a decreasing sequence, we have
\begin{eqnarray*}
\frac{1}{\sqrt{\lambda_{k+1}}} - \frac{1}{\sqrt{\lambda_k}}
&=& \frac{\sqrt{\lambda_k}-\sqrt{\lambda_{k+1}}}
    {\sqrt{\lambda_k}\sqrt{\lambda_{k+1}}}
~=~\frac{\lambda_k-\lambda_{k+1}}{\sqrt{\lambda_k}\sqrt{\lambda_{k+1}}
    (\sqrt{\lambda_k}+\sqrt{\lambda_{k+1}})} \\
&\geq& \frac{\lambda_k-\lambda_{k+1}}{2\lambda_k\sqrt{\lambda_{k+1}}}
~=~ \frac{\lambda_k-\left(1-\frac{\alpha_k}{n}\right)\lambda_k}
    {2\lambda_k\sqrt{\lambda_{k+1}}}
~=~ \frac{\frac{\alpha_k}{n}}{2\sqrt{\lambda_{k+1}}} .
\end{eqnarray*}
Combining with~(\ref{eqn:alpha-lb}) gives
\[
\frac{1}{\sqrt{\lambda_{k+1}}} - \frac{1}{\sqrt{\lambda_k}}
~\geq~ \frac{\sqrt{\gamma_0}}{2n}.
\]
By further noting $\lambda_0=1$, we obtain
\[
\frac{1}{\sqrt{\lambda_k}} ~\geq~ 1 + \frac{k}{n}\frac{\sqrt{\gamma_0}}{2}.
\]
Therefore
\[
\lambda_k ~\leq~ \left(\frac{n}{n+k\frac{\sqrt{\gamma_0}}{2}}\right)^2 .
\]
This completes the proof for Theorem~\ref{thm:arcd-rate}.

%
% Lin: Let's skip the conclusion remarks for now. Maybe it can be used
% in the revision to address reviewer's concerns or comments. 
%
%\section{Concluding remarks}
%
%The development of accelerated coordinate descent methods for the 
%composite case ($\Psi\neq 0$) is still an open problem.
%We hope the randomized estimate sequence framework presented in this paper
%can shed light on it.
%The current version of the accelerated methods may not be less efficient
%in practice due to the fact that it needs to update whole vectors.
%But hope randomized estimate sequence help better understanding of how
%to construct accelerated CD methods, and open a door for further exploration.


\begin{thebibliography}{10}

\bibitem{ChHsLi08}
K.-W. Chang, C.-J. Hsieh, and C.-J. Lin.
\newblock Coordinate descent method for large-scale $l_2$-loss linear support vector machines.
\newblock {\em Journal of Machine Learning Research}, 9:1369--1398, 2008.

\bibitem{HsChLiKeSu08}
C.-J. Hsieh, K.-W. Chang, C.-J. Lin, S. Keerthi, and S. Sundararajan.
\newblock A dual coordinate descent method for large-scale linear SVM.
\newblock In ICML 2008, pages 408--415, 2008.

\bibitem{LeLe10}
D. Leventhal and A. S. Lewis.
\newblock Randomized methods for linear constraints: convergence rates and conditioning.
\newblock {\em Mathematics of Operations Research}, 35(3):641--654, 2010.

\bibitem{LiOs09}
Y. Li and S. Osher.
\newblock Coordinate descent optimization for $l_1$ minimization with application to
compressed sensing; a greedy algorithm.
\newblock {\em Inverse Problems and Imaging}, 3:487--503,  2009.

\bibitem{LuTs02}
Z. Q. Luo and P. Tseng.
\newblock On the convergence of the coordinate descent method for convex differentiable minimization.
\newblock {\em Journal of Optimization Theory and Applications}, 72(1):7--35,  2002.

\bibitem{Nesterov04book}
Y. Nesterov.
\newblock Introductory Lectures on Convex Optimization: A Basic Course.
\newblock Kluwer Academic Publishers, Boston, 2004.

\bibitem{Nesterov07composite}
Y. Nesterov.
\newblock Gradient methods for minimizing composite objective function.
\newblock CORE Discussion Paper 1007/76, Catholic University of Louvain, Belgium, 2007.

\bibitem{Nesterov12rcdm}
Y.~Nesterov.
\newblock Efficiency of coordinate descent methods on huge-scale optimization problems.
\newblock {\em SIAM Journal on Optimization}, 22(2): 341--362, 2012.

\bibitem{QiScGo10}
Z. Qin, K. Scheinberg, and D. Goldfarb.
\newblock Effiient block-coordinate descent algorithms for the group lasso.
\newblock To appear in {\em Mathematical Programming Computation}, 2010.

\bibitem{RiTa11}
P. Richt\'{a}rik and M. Tak\'{a}\v{c}.
\newblock Efficient serial and parallel coordinate descent method for huge-scale truss
topology design.
\newblock {\em Operations Research Proceedings}, 27--32, 2012.

\bibitem{RichtarikTakac12}
P. Richt\'{a}rik and M. Tak\'{a}\v{c}.
\newblock Iteration complexity of randomized block-coordinate descent methods for
minimizing a composite function.
\newblock To appear in {\em Mathematical Programming}, 2011.

\bibitem{RichtarikTakac12-1}
P. Richt\'{a}rik and M. Tak\'{a}\v{c}.
\newblock Parallel coordinate descent methods for big data optimization.
\newblock Technical report, November 2012.

\bibitem{SahaTewari13}
A. Saha and A. Tewari.
\newblock On the non-asymptotic convergence of cyclic coordinate descent methods.
\newblock \emph{SIAM Journal on Optimization}, 23(1):576--601, 2013.


\bibitem{ShTe09}
S. Shalev-Shwartz and A. Tewari.
\newblock Stochastic methods for $l_1$ regularized loss minimization.
\newblock In Proceedings of the 26th International Conference on Machine Learning, 2009.

\bibitem{ShZh12}
S. Shalev-Shwartz and T. Zhang.
\newblock Proximal stochastic dual coordinate ascent.
\newblock Technical report, 2012.

\bibitem{TaRiGo13}
R. Tappenden, P. Richt\'{a}rik and J. Gondzio.
\newblock Inexact coordinate descent: complexity and preconditioning.
\newblock Technical report, April 2013.

\bibitem{Tseng01}
P. Tseng.
\newblock Convergence of a block coordinate descent method for nondifferentiable minimization.
\newblock {\em  Journal of Optimization Theory and Applications}, 109:475--494, 2001.

\bibitem{TsYu09-1}
P. Tseng and S. Yun.
\newblock Block-coordinate gradient descent method for linearly constrained nonsmooth
separable optimization.
\newblock {\em Journal of Optimization Theory and Applications}, 140:513--535, 2009.

\bibitem{TsYu09-2}
P. Tseng and S. Yun.
\newblock A coordinate gradient descent method for nonsmooth separable minimization.
\newblock {\em Mathematical Programming}, 117:387--423, 2009.

\bibitem{WeGoSc}
Z. Wen, D. Goldfarb, and K. Scheinberg.
\newblock Block coordinate descent methods for semidefinite programming.
\newblock In Miguel F. Anjos and Jean B. Lasserre, editors, Handbook on Semidefinite,
Cone and Polynomial Optimization: Theory, Algorithms, Software and Applications.
Springer, Volume 166: 533--564, 2012.

\bibitem{Wright10}
S. J. Wright.
\newblock Accelerated block-coordinate relaxation for regularized optimization.
\newblock {\em SIAM Journal on Optimization}, 22:159�-186, 2012.

\bibitem{WuLa08}
T. Wu and K. Lange.
\newblock Coordinate descent algorithms for lasso penalized regression.
\newblock {\em The Annals of Applied Statistics}, 2(1):224--244, 2008.

\bibitem{YuTo11}
S. Yun and K.-C. Toh.
\newblock A coordinate gradient descent method for $l_1$-regularized convex minimization.
\newblock {\em Computational Optimization and Applications}, 48:273--307, 2011.
\end{thebibliography}
\end{document}